\numberwithin{equation}{section}
\newcommand{\R}{\mathbb{R}}
\newcommand{\T}{\mathbb{T}}
\newtheorem{thm}{Theorem}[section]
\newtheorem{lem}[thm]{Lemma}
\newtheorem{prop}[thm]{Proposition}
\newtheorem{corr}[thm]{Corollary}
\newtheorem{define}[thm]{Definition}
\newtheorem{remark}[thm]{Remark}
\newtheorem{obs}[thm]{Observation}
\newcommand{\1}{\mathbf{1}}
\newcommand{\av}[1]{\left|{#1}\right|}
\newcommand{\ip}[2]{\left\langle{{#1}},{{#2}}\right\rangle}
\newcommand{\norm}[1]{\left\|{#1}\right\|}
\renewcommand{\l}{\left}
\renewcommand{\r}{\right}
\newcommand{\mb}[1]{\mathbf{#1}}
\DeclareMathOperator{\sign}{sign}
\DeclareMathOperator{\proj}{proj}
\newcommand{\J}{{\bf J}}
\newcommand{\D}{{\bf D}}
\newcommand{\A}{{\bf A}}
\renewcommand{\phi}{\varphi}
\newcommand{\bomega}{{\boldsymbol{\omega}}}
\newcommand{\btheta}{{\boldsymbol{\theta}}}
\newcommand{\uu}{{\mathbf{u}}}
\newcommand{\vv}{{\mathbf{v}}}
\newcommand{\w}{{\mathbf{w}}}
\newcommand{\z}{{\mathbf{z}}}
\begin{document}

\title{Configurational stability for the Kuramoto--Sakaguchi model}
\author[1]{Jared Bronski}
\author[2]{Thomas Carty}
\author[1]{Lee DeVille}
\affil[1]{Department of Mathematics, University of Illinois}
\affil[2]{Department of Mathematics, Bradley University}

\date{\today}

\maketitle

\abstract{The Kuramoto--Sakaguchi model is a modification of the well-known Kuramoto model that adds a phase-lag paramater, or ``frustration'' to a network of phase-coupled oscillators.  The Kuramoto model is a flow of gradient type, but adding a phase-lag breaks the gradient structure, significantly complicating the analysis of the model. We present several results determining
the stability of phase-locked configurations:  the first of these gives a sufficient condition for stability, and the second a sufficient condition for instability. (In fact, the instability criterion gives a count, modulo 2, of the dimension of the unstable manifold to a fixed point and having an odd count is a sufficient condition for instability of the fixed point.)  We also present numerical results for both small and large collections of Kuramoto--Sakaguchi oscillators.}

{\bf Keywords}   coupled oscillators, Kuramoto model

{\bf AMS subject classifications.} 34D06, 34D20, 37G35, 05C31\\

{\bf

The Kuramoto-Sakaguchi model is a fundamental model for the study of phase-locking phenomena of coupled oscillators where a natural frustration or de-tuning parameter is inherent in the underlying system.  Since its introduction in 1987, this model has been used extensively to model, among other things, chemical oscillation, neural networks, and laser arrays.  However the vast majority of the analysis has been numerical.  The reason for this is that the addition of a frustration parameter to a network of phase-coupled oscillators causes the system to lose much of the natural symmetries associated with the standard Kuramoto model.  Over the last decade, mathematical results for the Kuramoto-Sakaguchi model mostly have focused on the behavior of the system in the mean-field limit - i.e. the number of oscillators goes to infinity.  It is only very recently that people have returned to rigourously analyzing finite network Kuramoto-Sakaguchi systems analytically.   In this current work we present two results, a sufficient condition for stability and a method for counting the number of eigenvalues with positive real part modulo two, which gives a sufficient condition for instability.
}

\section{Introduction}

\subsection{Problem Formulation}

We consider the differential equation on $\T^n$:
\begin{equation}\label{eq:KS}
\frac{d\theta_i}{dt} = \omega_i + \gamma \sum_{j=1}^n \left(\sin(\theta_j-\theta_i-\alpha)+\sin(\alpha)\right).
\end{equation}
where $i=1,\dots,n$, and the parameters $\omega_i, \gamma\in \R$ and
$\av\alpha<\pi/2$.  This system was originally analyzed in a series of
papers~\cite{Kuramoto.Sakaguchi.1986, Sakaguchi.etal.1987,
  Sakaguchi.etal.1988} and is commonly known today\footnote{A
  rereading of the early literature suggests that a more fitting name
  for this system of equations would be the
  Sakaguchi--Shinomono--Kuramoto (SSK) system of equations, but
  against the weight of a consensus in the literature the gods
  themselves contend in vain.} as the Kuramoto--Sakaguchi
system~\cite{DeSmet.Aeyels.2007, Omelchenko.Wolfrum.2013,
  Amadori.Ha.Park.17, HaKimPark2017remarks, HaKoZhang2018emergence}.  This system is a generalization of the
well-studied Kuramoto system, which is obtained by setting $\alpha$ to
zero in~\eqref{eq:KS}. (In what follows, we will often refer to
the system with $\alpha=0$ as the ``standard Kuramoto'' system.) The
parameter $\alpha$ is alternatively called the {\em phase-lag}, {\em
  detuning}, or {\em frustration} parameter,
see~\cite{Kiss.Zhai.Hudson.2005} for physical justification of each of
these terms in the context of chemical oscillations.

This current work extends the spectral analysis performed upon the standard Kuramoto model originally pioneered in~\cite{MS1} and reconsidered in~\cite{Bro.DeV.Park.2012}.  From a mathematical point of view, the addition of the nonzero $\alpha$ parameter leads to a significant increase in difficulty in the analysis of~\eqref{eq:KS} as compared to standard Kuramoto.  In~\cite{Bro.DeV.Park.2012}, when $\alpha=0$, the system~\eqref{eq:KS} is a gradient flow, and in particular the Jacobian at any fixed point is symmetric, simplifying the analysis considerably.

The addition of $\alpha$ also adds a level of dynamical complexity to this model.  In standard Kuramoto,  the center of mass of the system~\eqref{eq:KS} rotates around the circle at a rate given by the average of the $\omega_i$; for fixed $\bomega$ any two solutions will precess at the same rate.  In contrast, the system~\eqref{eq:KS} can support multiple configurations that precess at different rates for the same $\bomega$.  This is one feature that is in stark contrast to the standard Kuramoto model, and requires a rethinking of many of the intuitions associated with that model.

\subsection{Phase-locking and projections}\label{subsec:Stabilty is different in KS}

\newcommand{\f}{{\mathbf{f}}}

Generally we will find it useful to denote the vector field $\f\colon\T^n\rightarrow \R^n$ defined as
\begin{equation}\label{eq:defoff}
f_i(\btheta,\alpha)=\sum_{j=1}^n \left(\sin(\theta_j-\theta_i-\alpha)+\sin(\alpha)\right),
\end{equation}
and we can write~\eqref{eq:KS} compactly as
\begin{equation}\label{eq:KSvec}
\frac{d\btheta}{dt}=\bomega+\gamma\f(\btheta,\alpha).
\end{equation}
It is clear from this formulation that scaling $\gamma$ is equivalent to scaling $\bomega$, so in this paper we choose the convention throughout that $\gamma = 1$.

We first note that our definition in~\eqref{eq:KS} is slightly different than that commonly chosen in most studies, where there is no $\sin(\alpha)$ term.  Of course, this only shifts the vector field by a constant amount and has no effect on the Jacobian of the system, but it has the nice normalization that $\btheta=0$ is a fixed point for $\bomega=0$ and any $\alpha$.  In particular, notice that the function $\sin(\cdot-\alpha) + \sin\alpha$ has a fixed point at 0 with positive derivative whenever $\av \alpha < \pi/2$ ---   this makes it ``most like'' standard Kuramoto.  In particular, it follows directly that if we choose $\bomega=0$ and any $\av\alpha<\pi/2$, then $\btheta = 0$ is an attracting fixed point.
Moreover, if we consider the family of functions $\sin(\cdot-\alpha) + \sin\alpha$ for $-\pi/2<\alpha<\pi/2$, then the effect is that the  ``stable'' point at zero remains fixed, while one of the unstable points moves toward the origin, and at $\alpha=\pm\pi/2$ there is a saddle-node bifurcation.

The fundamental question considered in this paper is  whether~\eqref{eq:KS} (or~\eqref{eq:KSvec}) admits a  phase-locked solution and whether or not this solution is dynamically stable.

\begin{define}\label{def:phaselocked-dyanamicstabliity}
We say that a solution to~\eqref{eq:KS} is {\bf  phase-locked} if $\btheta(t)$ is a solution and if $\theta_i(t)-\theta_j(t)$ is constant for every $i,j$.  Equivalently, $\btheta(t)$ is a phase-locked solution if $\btheta(t) = \btheta_0 + c t \1$; thus any phase-locked solution can thus either be a fixed configuration, or a rigidly rotating configuration.  In this case, we say the phase-locked solution rotates at velocity $c$. By {\bf dynamically stable} we mean that perturbations of a solution decay back to it, i.e. if $\btheta(t)$ is a dynamically stable configuration, then there is an $\epsilon>0$ such that for all $v$ with $\norm v < \epsilon$, if $\btheta(0) = \widehat\btheta(0) + v$, then
\begin{equation*}
  \lim_{t\to\infty} \norm{\proj_{\mathbf{1}^{\perp}}(\btheta(t) - \widehat\btheta(t))} = 0,
\end{equation*}
 where $\proj_{\mathbf{1}^{\perp}}$ denotes the orthogonal projection
 onto the orthogonal complement of the vector $\mathbf{1}^\perp.$ We say that ``$\bomega$ gives rise to a phase-locked solution'' if there is a phase-locked solution for~\eqref{eq:KSvec} with that $\bomega$.
 \end{define}

Note that $\f(\cdot,\alpha)$ has an $U(1)$ symmetry:
\begin{equation*}
  \f(\btheta_0 + c{\bf 1},\alpha) = \f(\btheta_0,\alpha),
\end{equation*}
which necessitates the $\proj_{\mathbf{1}^{\perp}}$ in the definition
above. Given any $\btheta_0\in\R^n$, we can choose $\bomega$ to make this a fixed point of our system, by choosing $\bomega_0 = - \f(\btheta_0)$.  Then $\btheta_0$ is a fixed point for~\eqref{eq:KSvec} with $\bomega=\bomega_0$.
Let $\btheta(t)$ be a solution of~\eqref{eq:KS} for some $\bomega_0$.  Choose $\boldsymbol{\eta}(t) = \btheta(t) + \kappa t {\bf 1}$.
\begin{align*}
  \boldsymbol{\eta}'(t) &= \btheta'(t) + \kappa{\bf 1}\\
  	  &= \bomega_0 + \f(\btheta(t),\alpha)+ \kappa{\bf 1}\\
	  &= (\bomega_0 + \kappa {\bf 1}) + \f(\eta(t),\alpha),
\end{align*}
where in the last line we exploited the $U(1)$ symmetry of $\f$.  This means that shifting a solution with a constant velocity is equivalent to shifting the frequencies by a constant, and vice versa.

From this it follows that if $\bomega$ gives rise to a phase-locked solution, then $\bomega+ c{\bf 1}$ does as well.  Thus let us define:

\newcommand{\roa}{ R_\bomega^{(\alpha)}}
\newcommand{\soa}{ L_\bomega^{(\alpha)}}
\newcommand{\poa}{ P_\bomega^{(\alpha)}}

\begin{define}
  For fixed $\alpha$, we define the {\bf frequency \{region, slice, projection\}} as the sets $\roa,\soa,\poa$ where
 \begin{align*}
   \roa &= \{\omega:\mbox{~\eqref{eq:KSvec} has a phase-locked solution}\},\\
   \soa &= \{\omega:\mbox{~\eqref{eq:KSvec} has a fixed point}\},\\
   \poa &= \mbox{orthogonal projection of $\roa$ onto $\mathbf{1}^\perp$}.
 \end{align*}
\end{define}

For standard Kuramoto, the distinction between frequency slice and projection is not important as $L_\omega^{(0)} = P_\omega^{(0)}$.  The fact that $\soa\neq \poa$ for general $\alpha$ might be surprising to those used to standard Kuramoto.  For standard Kuramoto, fixed points, whether they be stable or unstable, always precess according to their average frequency.  Additionally, for any $\bomega$ we can translate via $\boldsymbol{\eta}(t)$ with $\kappa=\frac{1}{n}\sum \omega_i$ to shift to an equivalent system with the new $\bomega$ lying in the mean zero plane $\mathbf{1}^\perp$.  Then, for standard Kuramoto, $\mathbf{f}$ maps the mean zero plane in the configuration space $\btheta$ into the mean zero plane in the frequency space $\bomega$.  However, this construction does not work for general $\alpha$.

We demonstrate this for $N=3$ oscillators in Figure~\ref{fig:ShadowGraphs} which depicts the sets
$\soa$ and $\roa$ for $\alpha = 0,\frac{\pi}{12},\frac{\pi}{6},\frac{\pi}{3}$. The coordinates are
arranged as follows: the vector $\left(\frac{1}{\sqrt{3}}, \frac{1}{\sqrt{3}},
\frac{1}{\sqrt{3}}\right)^t$ is oriented in the $\hat z$ direction, while the graphs are drawn over
the (mean zero) configuration space: $\btheta := \hat{x}\left(\frac1{\sqrt{2}},-\frac{1}{\sqrt{2}},0\right) + \hat{y}\left(\frac{1}{\sqrt{6}},\frac{1}{\sqrt{6}},-\frac{2}{\sqrt{6}}\right)$.  When $\alpha=0$ the set $\soa$ is two dimensional and agrees with the set $\roa$. (Note: for visual clarity the $\roa$ region has been shifted by one unit in the negative $\hat z$
direction. ) As $\alpha$ is increased $\soa$ becomes increasingly non-planar. Also note the loss of symmetry: when $\alpha=0$ the region has symmetry group $D_6$, but for $\alpha\neq 0$ the symmetry group is $D_3. $ When $\alpha =0$ the standard Kuramoto is invariant under permutations of the oscillators along with $\btheta \mapsto -\btheta, \bomega \mapsto-\bomega$. This symmetry is lost for non-zero $\alpha$.
\begin{figure}[h]
\begin{center}
\includegraphics[width=0.4\textwidth]{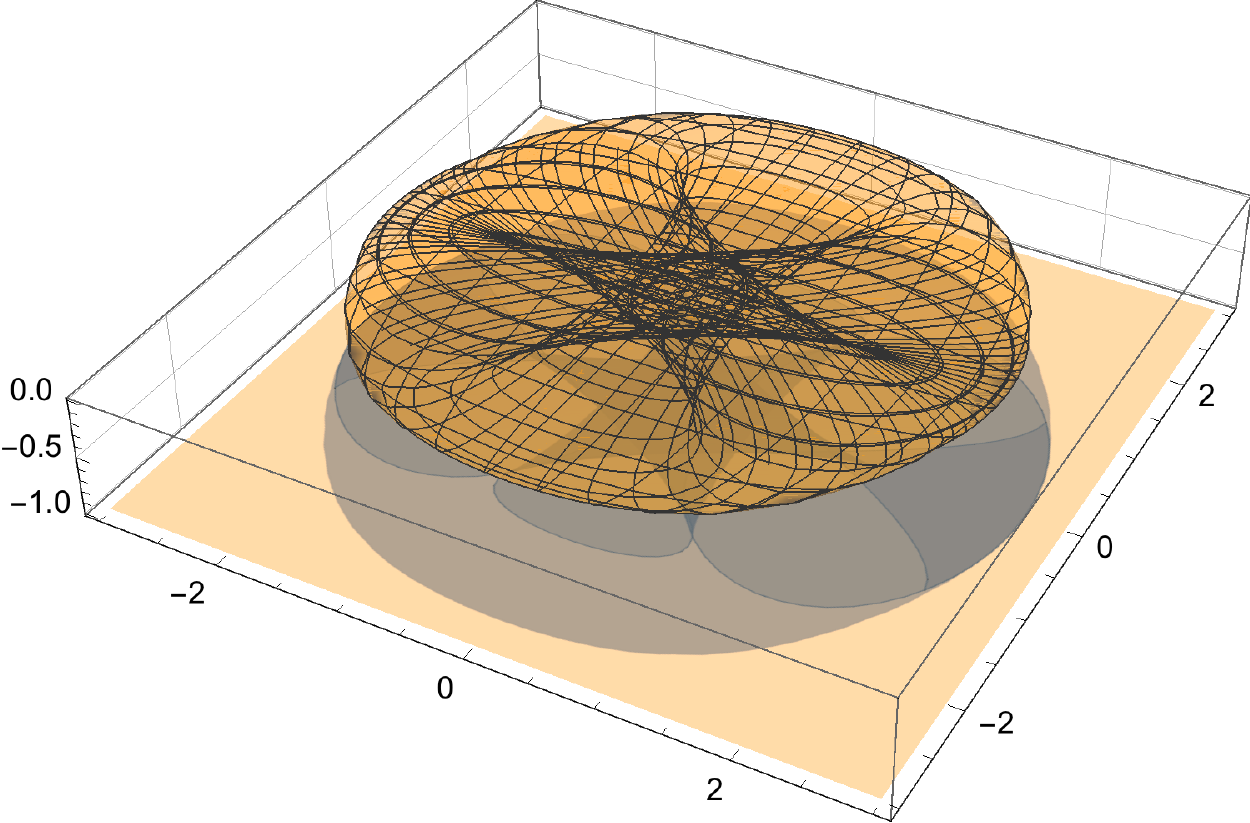}\includegraphics[width=0.4\textwidth]{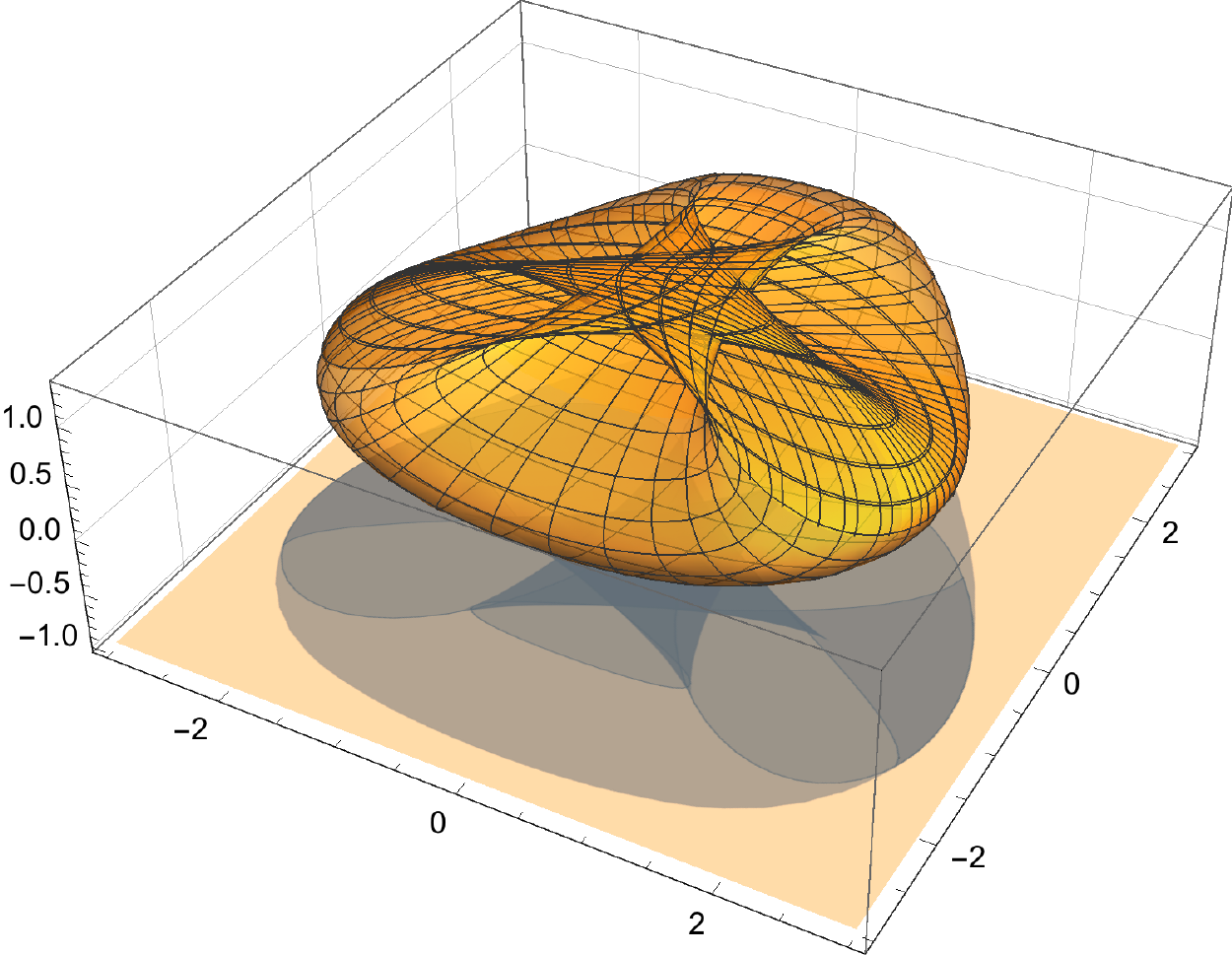}\\
\includegraphics[width=0.4\textwidth]{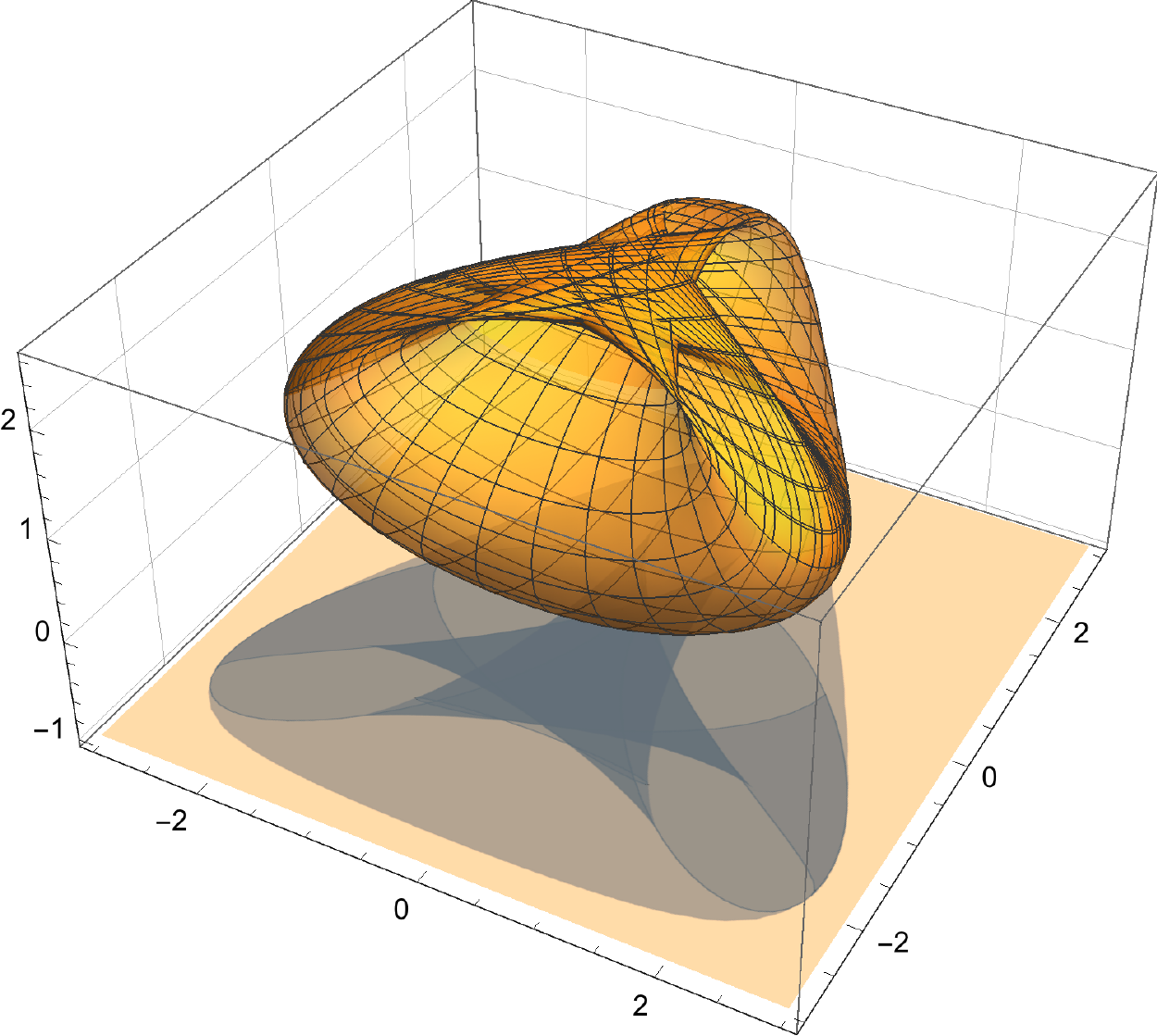}\includegraphics[width=0.4\textwidth]{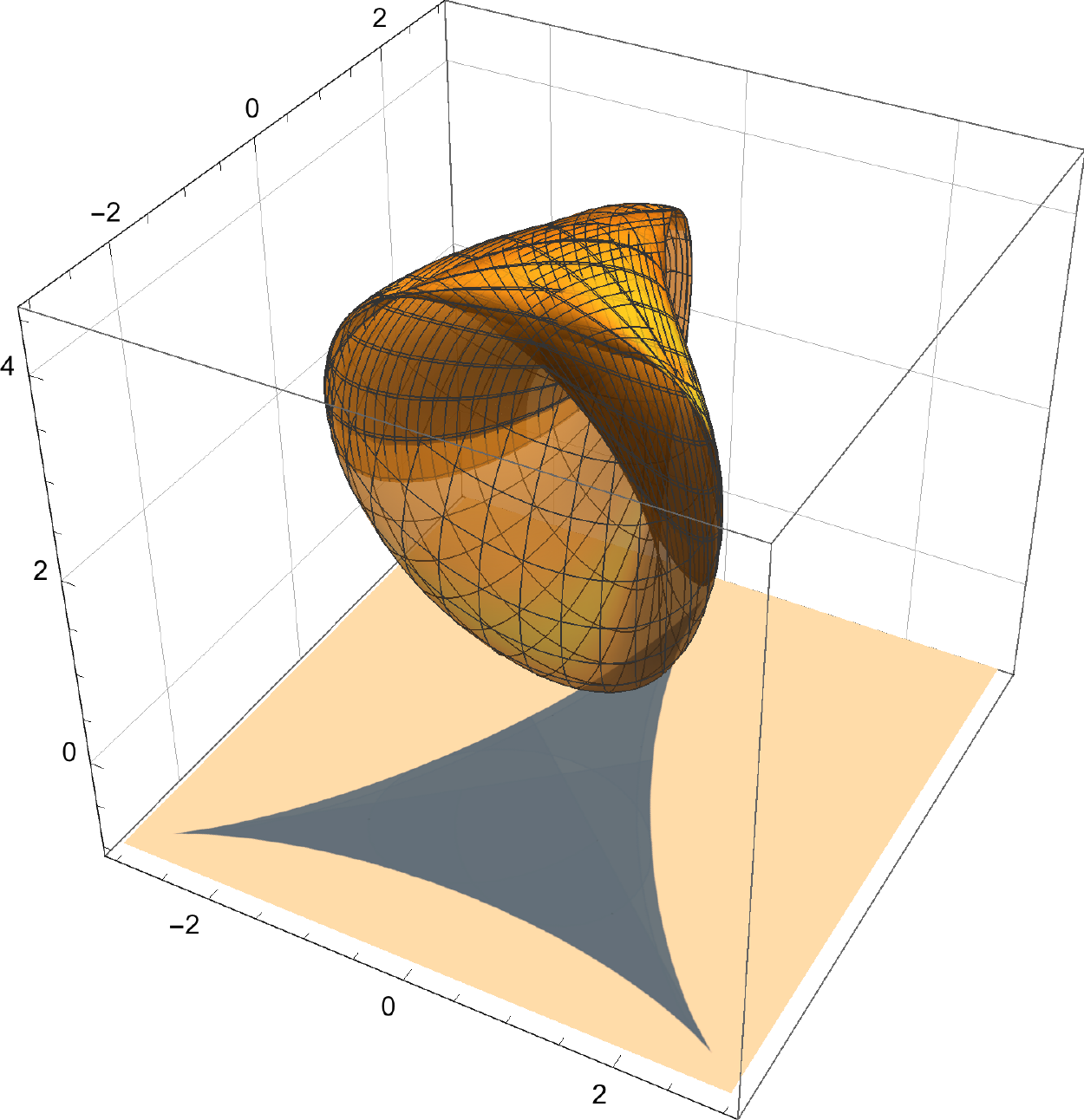}
\end{center}
\caption{$N=3$, the regions  $\soa$ and $\poa$ for  $\alpha \in \{0,\frac{\pi}{12},\frac{\pi}{6},\frac{\pi}{3}\}$ }
\label{fig:ShadowGraphs}
\end{figure}

The lifting of $\soa$ out of the mean zero plane has a very interesting impact on solutions.  For $\alpha\neq 0$, there are choices of pairs $\btheta,\bomega$ such that $\btheta$ is a fixed point (and thus has average velocity zero) while the $\bomega$ has nonzero average!  Again using an specific $\boldsymbol{\eta}$--shift, we can also see that this implies that there are cases with $\sum\omega_i = 0$, but~\eqref{eq:KS} supports precessing phase-locked solutions.  Continuing in this direction, the system~\eqref{eq:KS} can support solutions that precess at different velocities for even the same $\bomega$.   We exhibit this in the case $n=3$.  Modding for translations and reflections, for standard Kuramoto there are six fixed points (with zero angular velocity):
\begin{equation*}
(0,0,0),\quad (2\pi/3,0,4\pi/3), \quad (0,2\pi/3,4\pi/3),\quad (0,\pi,0), \quad (0,0,\pi), \quad (\pi,0,0).
\end{equation*}
(Note: for clarity, we have not projected these fixed-points into the
mean zero plane.)  For Kuramoto--Sakaguchi with $\bomega\neq 0$, there
are still six phase-locked solutions.  However these different
phase-locked solutions rotate with different angular frequencies.  To
see this, first note that $\f((0,0,0),\alpha) = (0,0,0)$, so the
solution with all angles the same is a fixed point for $\bomega=0$.
If we next consider the twist state  $(0,2\pi/3,4\pi/3)$, we can
compute $\f((0,2\pi/3,4\pi/3),\alpha) = 3\sin\alpha(1,1,1)$.  Thus
this solution rotates with angular velocity $3\sin\alpha$ or
(equivalently) is a fixed point for  $\bomega = -3\sin\alpha(1,1,1)$.   Thus we see that $\bomega =0$ supports both the fixed point $\btheta=0$ and the phase-locked equilateral configuration which rotates at velocity $3\sin\alpha$.

For the three fixed points that are permutations of a single $\pi$,
the situation is slightly more complicated, in that the equilibrium
configuration depends on the parameter $\alpha$.  For example, to consider solutions near to $(0,\pi,0)$, we write $\btheta(\alpha) = (0,\pi+\phi(\alpha),0)$. Then we see that
\begin{equation*}
  \f(\btheta(\alpha)) = (\sin (\alpha -\phi (\alpha ))+\sin (\alpha ),2 (\sin (\phi (\alpha )+\alpha )+\sin (\alpha )),\sin (\alpha -\phi (\alpha ))+\sin (\alpha )).
\end{equation*}
If these components are all equal, we obtain a phase-locked solution.  This condition is the functional equation
\begin{equation}
  0 = \sin (\alpha -\phi (\alpha ))-\sin (\alpha )-2 \sin (\alpha+\phi (\alpha ) ),
\end{equation}
which is equivalent to
\[ \cos(\phi(\alpha)) \sin(\alpha) + 3\cos(\alpha)\sin(\phi(\alpha)) =-\sin(\alpha).
\]
Applying some trigonometric identities this can be solved to find that
\[
\phi(\alpha) =  - 2\arctan \left(\frac{\sin\alpha}{3 \cos\alpha}\right).
\]
As before, $(0,\pi+\phi(\alpha),0)$ is a fixed point for the system with $\bomega=-(\sin(\alpha)+\sin(\alpha-\phi(\alpha)))(1,1,1)$.  Equivalently, we see that $\bomega=0$ support the solution $\btheta(t) = (0,\pi+\phi(\alpha),0)-(\sin (\alpha -\phi (\alpha ))+\sin (\alpha ))(1,1,1)$, which rotates at velocity $\sin(\alpha)+\sin(\alpha-\phi(\alpha))$.

\begin{prop}
  The set $\roa$ can be generated by translation by either $\soa$ or $\poa$, and we can represent $\soa$ as a graph over $\poa$, in the sense that there is a function $z\colon\poa\to\R$ such that if $y$ is any coordinate system in ${\bf 1}^\perp$, then $\poa = (y,0), z\in\poa$ and $\soa = (y,z(y)), z\in \poa$.
\end{prop}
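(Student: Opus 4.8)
The two assertions can be handled separately, leaning on the frequency-shift identity recorded just above. For the translation claims, recall that if $\btheta(t)$ solves~\eqref{eq:KSvec} for a given $\bomega$ then $\btheta(t)+\kappa t\1$ solves it for $\bomega+\kappa\1$ and remains phase-locked, so $\roa$ is invariant under translation by $\R\1$; and using the $U(1)$ symmetry $\f(\btheta_0+c\1,\alpha)=\f(\btheta_0,\alpha)$, a rotating solution $\btheta(t)=\btheta_0+ct\1$ exists for $\bomega$ exactly when $c\1=\bomega+\f(\btheta_0,\alpha)$, i.e.\ exactly when $\bomega-c\1$ is the frequency of a genuine fixed point for some $c$; hence $\roa=\soa+\R\1$. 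Since every $\R\1$-invariant set $S$ satisfies $S=\proj_{\1^\perp}(S)+\R\1$ --- the only point needing comment, $\proj_{\1^\perp}(S)\subseteq S$, holding because $x\in S$ forces $x-\tfrac1n\langle x,\1\rangle\1\in S$ --- we also obtain $\roa=\poa+\R\1$, and then $\poa=\proj_{\1^\perp}(\roa)=\proj_{\1^\perp}(\soa)$.

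For the graph claim, write $\bomega=(y,z)$ in the orthogonal splitting $\R^n=\1^\perp\oplus\R\1$. The first part already shows that $\proj_{\1^\perp}$ maps $\soa$ onto $\poa$, so the content is injectivity of this map, i.e.\ that $z$ is a well-defined function of $y$ along $\soa$. I would first express $z$ through the configuration: summing the components of~\eqref{eq:defoff}, and using $\sum_{i,j}\sin(\theta_j-\theta_i)=0$ together with $\sum_{i,j}\cos(\theta_j-\theta_i)=\bigl(\sum_j\cos\theta_j\bigr)^2+\bigl(\sum_j\sin\theta_j\bigr)^2=:r^2$, gives $\langle\f(\btheta,\alpha),\1\rangle=(n^2-r^2)\sin\alpha$; hence a fixed point $\btheta_0$ with frequency $\bomega$ satisfies $\langle\bomega,\1\rangle=-\bigl(n^2-r(\btheta_0)^2\bigr)\sin\alpha$, so its $\1$-component $z$ depends on $\btheta_0$ only through $r(\btheta_0)$. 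Therefore the graph property is equivalent to the assertion that $\proj_{\1^\perp}\f(\btheta_0,\alpha)$ determines the scalar $r(\btheta_0)$; writing the order parameter as $\sum_j(\cos\theta_j,\sin\theta_j)=r(\cos\psi,\sin\psi)$ one has the explicit forms $f_i=r\sin(\psi-\theta_i-\alpha)+n\sin\alpha$ and $(\proj_{\1^\perp}\f)_i=r\sin(\psi-\theta_i-\alpha)+\tfrac1n r^2\sin\alpha$, and the remaining task is to read off $r$ from these.

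This last step is where I expect the real difficulty, and it is the part that makes the statement nontrivial rather than formal. The natural attack is a continuation argument: near a configuration at which $D\f(\btheta_0)$ has maximal rank $n-1$ and does not contain $\1$ in its image, $\soa$ is a smooth hypersurface transverse to the $\1$-direction and hence locally a graph $z=z(y)$, and one then tries to propagate this across $\poa$ using connectedness of the relevant branch together with the dependence of $z$ on $r$ found above. Making the argument global --- in particular controlling the fold locus, where $\1$ enters the image of $D\f$ and $\proj_{\1^\perp}|_{\soa}$ ceases to be a local diffeomorphism, so that a fibre of the projection can meet $\soa$ more than once --- is the crux, and is presumably where one must invoke extra structure: a bound on $|\alpha|$, or a restriction to the connected branch containing the synchronized state, or to the stable sheet on which the linearization stays nondegenerate.
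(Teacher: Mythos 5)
Your first paragraph (the translation claims) is correct and is essentially the paper's argument made explicit: the shift identity $\boldsymbol\eta(t)=\btheta(t)+\kappa t\1$ shows $\roa$ is invariant under translation by $\R\1$, hence $\roa=\soa+\R\1=\poa+\R\1$; the paper disposes of this part in one sentence plus the reduction to the graph claim.

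The gap is in the second half: you reduce the graph property to injectivity of $\proj_{\mathbf{1}^{\perp}}$ restricted to $\soa$ and then stop, so as written the proposal does not prove the statement. However, your suspicion that extra structure is needed at the ``fold locus'' is not just a technical worry --- it is where the statement, and the paper's own proof, actually break. The paper's proof of the graph claim is two sentences: given $\bomega\in\soa$ with $\f(\btheta)=-\bomega$, the configuration $\btheta$ gives a phase-locked solution of velocity $c$ for the frequency $\bomega+c\1$, and from this it concludes that $\bomega$ is the \emph{unique} intersection of $\soa$ with the line $\bomega+\R\1$. That inference needs the locking velocity to be unique for a given frequency vector, which the paper itself shows is false: for $n=3$ and $\alpha\neq 0$, $\bomega=\0$ supports both the fixed point $\btheta=\0$ and the twist state rotating at velocity $3\sin\alpha$; equivalently, both $\0$ and $-3\sin\alpha\,\1$ lie in $\soa$ and project to the same point of $\poa$. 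So the fibre over $y=0$ meets $\soa$ more than once, the function $z$ is not single-valued globally, and the claim can only hold sheet-wise (e.g.\ on the stable sheet $\mathcal{S}_{\omega}$ introduced later, which is what the figures depict as the ``bottom'' surface) or locally where $\1\notin\im D\f$. Your computation $\ip{\f(\btheta,\alpha)}{\1}=(n^2-r^2)\sin\alpha$ is correct (and does not appear in the paper); it makes the multivaluedness quantitative, since the height of a sheet over $\poa$ is determined by the order parameter $r$ of the corresponding configurations, and different coexisting equilibria (synchronized state, twist states, $\pi$-states) have different $r$. In short: your proposal is incomplete relative to the literal statement, but the step you could not supply is one the paper does not validly supply either, and your proposed remedy --- restricting to a connected branch or the stable sheet where the projection is a local diffeomorphism --- is the correct way to salvage it.
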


\begin{proof}
  For generation:  Since $\poa$ is the projection of $\roa$ this is clear.  The claim about $\soa$ will follow from the second claim.

  To see the second claim, let $\bomega\in \soa$.  This means that there is a $\btheta$ with $F(\btheta) = -\bomega$.  Note that for any $c\neq 0$, the system~\eqref{eq:KSvec} has a phase-locked solution with velocity $c$, and therefore $\bomega$ is the unique intersection of $\soa$ with the line $\bomega+c{\bf 1}$.
\end{proof}

\subsection{Low rank analysis of the Jacobian}

In the spirit of the work in~\cite{Bro.DeV.Park.2012}, we begin by recasting the Jacobian of the forcing function of~\eqref{eq:KS} as a low-rank perturbation of a diagonal matrix.

\begin{lem}
\label{linearized flow form lemma}
The linearized flow of~\eqref{eq:KS} takes the form
\[
\frac{d{\bf x}}{dt} = {\bf J} {\bf x}
\]
where ${\bf J}$ is a non-symmetric Laplacian matrix. Moreover ${\bf J}$ can be decomposed in the form
\[
{\bf J} = {\bf D} + {\bf A}
\]
where $D$ is diagonal and ${\bf A}$ of rank at most $2$.
\end{lem}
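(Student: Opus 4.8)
The plan is a direct differentiation followed by a trigonometric regrouping. First I would compute the entries of $\J$ from~\eqref{eq:defoff}: for $k\neq i$ only the $j=k$ summand depends on $\theta_k$, giving $J_{ik}=\cos(\theta_k-\theta_i-\alpha)$, while on the diagonal (the $j=i$ summand being constant) one gets $J_{ii}=-\sum_{j\neq i}\cos(\theta_j-\theta_i-\alpha)$. In particular every row of $\J$ sums to zero, i.e.\ $\J\1=0$; this is the infinitesimal form of the $U(1)$ symmetry of $\f$ noted above, and it is what is meant by saying $\J$ is a Laplacian matrix. It is non-symmetric because $\cos(\theta_k-\theta_i-\alpha)$ and $\cos(\theta_i-\theta_k-\alpha)=\cos(\theta_k-\theta_i+\alpha)$ differ as soon as $\alpha\neq 0$.

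For the rank bound I would introduce the auxiliary matrix $\mathbf{M}$ with $M_{ik}=\cos(\theta_k-\theta_i-\alpha)$ for \emph{all} $i,k$, so that $\J$ and $\mathbf{M}$ agree off the diagonal. The one step requiring care is the regrouping: expanding $\cos(\theta_k-\theta_i-\alpha)$ naively in $\cos\theta$'s and $\sin\theta$'s produces four rank-one terms, but the subtraction identity
\[
\cos(\theta_k-\theta_i-\alpha)=\cos\theta_i\,\cos(\theta_k-\alpha)+\sin\theta_i\,\sin(\theta_k-\alpha)
\]
organizes things so that the row index enters only through $(\cos\theta_i,\sin\theta_i)$ and the column index only through $(\cos(\theta_k-\alpha),\sin(\theta_k-\alpha))$. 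Hence $\mathbf{M}=\mathbf{c}\,\uu^{T}+\mathbf{s}\,\vv^{T}$ with $\mathbf{c}=(\cos\theta_i)_i$, $\mathbf{s}=(\sin\theta_i)_i$, $\uu=(\cos(\theta_k-\alpha))_k$, $\vv=(\sin(\theta_k-\alpha))_k$, a sum of two rank-one matrices, so $\operatorname{rank}\mathbf{M}\le 2$. (Equivalently, with $\z=(e^{i\theta_j})_j$ one has $\mathbf{M}=\tfrac12 e^{-i\alpha}\,\bar\z\,\z^{T}+\tfrac12 e^{i\alpha}\,\z\,\bar\z^{T}$, manifestly of rank at most $2$.)

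To finish, set $\A:=\mathbf{M}$ and $\D:=\J-\mathbf{M}$. Off the diagonal $\D$ vanishes by construction, so $\D$ is diagonal (with $D_{ii}=-\sum_{j}\cos(\theta_j-\theta_i-\alpha)$, i.e.\ the negative $i$-th row sum of $\mathbf{M}$), while $\A$ has rank at most $2$; this is the asserted decomposition $\J=\D+\A$. I do not expect a genuine obstacle beyond getting the bookkeeping right in the middle step and confirming that the four-term expansion really does collapse to rank $2$ rather than rank $4$.
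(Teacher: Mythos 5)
Your proposal is correct and follows essentially the same route as the paper: compute the Jacobian entries, peel off a diagonal matrix $\D$ with $D_{ii}=-\sum_j\cos(\theta_j-\theta_i-\alpha)$, and write the remaining full cosine matrix as a sum of two rank-one outer products via the angle-subtraction identity. The only difference is cosmetic --- you place the entire phase lag in the column factors $\bigl(\cos(\theta_k-\alpha),\sin(\theta_k-\alpha)\bigr)$, whereas the paper splits it symmetrically as $\theta_i+\alpha/2$ and $\theta_j-\alpha/2$; both yield the identical decomposition $\J=\D+\A$ with $\operatorname{rank}\A\le 2$.
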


\begin{proof}
A straightforward calculation gives the following expression for the Jacobian matrix
\begin{align}
\label{J}
\mathbf{J}  &:= [J_{ij}] \text{ where } J_{ij}=[\partial_{\theta_j}\dot{\theta_i}] =\left\{\begin{array}{ll}
-\sum_{i\neq j}\cos{(\theta_j-\theta_i-\alpha)}, & i=j\\
\cos{(\theta_j-\theta_i-\alpha)}, & i\neq j
\end{array}\right.
\end{align}
Equivalently,
\[
J_{ij}=\left\{\begin{array}{ll}
-\sum_{j}\cos{(\theta_j-\theta_i-\alpha)}+\cos{(\theta_i-\theta_i-\alpha)}, & i=j\\
\cos{(\theta_j-\theta_i-\alpha)}, & i\neq j
\end{array}\right.
\]
Then $\mathbf{J}$ can be decomposed as the sum of the diagonal matrix $\mathbf{D}$,
\[
\mathbf{D}=[D_{ij}] \text{ where } d_{ij}=-\delta_{ij} \sum_{j}\cos{(\theta_j-\theta_i-\alpha)},
\]
and the  matrix $\mathbf{A}$,
\begin{equation}\label{eq:defofA}
\mathbf{A}=[a_{ij}] \text{ where } a_{ij}=\cos{(\theta_j-\theta_i-\alpha)}.
\end{equation}

To show that $\mathbf{A}$ is at most rank two, we will show that it can be written as the sum of two rank one matrices.  Note that
\begin{align*}
    a_{ij} &= \cos{(\theta_j-\theta_i-\alpha)}\\
            &= \cos{(\theta_j-\alpha/2-(\theta_i+\alpha/2))}\\
            &= \cos{(\theta_j-\alpha/2)}\cos{(\theta_i+\alpha/2)}+\sin{(\theta_j-\alpha/2)}\sin{(\theta_i+\alpha/2)}.
\end{align*}
Thus
\[
\mathbf{A}=[\cos{(\theta_i+\alpha/2)}\cos{(\theta_j-\alpha/2)}]+[\sin{(\theta_i+\alpha/2)}\sin{(\theta_j-\alpha/2)}].
\]
Each row of the first matrix in the decomposition of $\mathbf{A}$ is a scalar multiple of the vector $[\cos{(\theta_j-\alpha/2)}]^t$.  Hence it is rank one.  Similarly, each row in the second matrix is a scalar multiple of the vector $[\sin{(\theta_j-\alpha/2)}]^t$.  Thus, $\mathbf{A}$ is at most a rank two matrix.
\end{proof}

The fact that the Jacobian matrix of the flow can be written as the sum of a diagonal piece
and a low-rank (rank two) piece will simplify the spectral analysis.


\begin{obs}
\label{J as rank two perturb}
The matrix $\J$ can be written in the form
\[
\J = \D + \uu\otimes\vv + \w\otimes\z.
\]
More specifically $\A=\uu\otimes\vv + \w\otimes\z$ where
\begin{align*}
\uu= [\cos{(\theta_i+\alpha/2)}]^t\\
\vv= [\cos{(\theta_j-\alpha/2)}]^t\\
\w= [\sin{(\theta_i+\alpha/2)}]^t\\
\z= [\sin{(\theta_j-\alpha/2)}]^t
\end{align*}
and $\D$ is diagonal with diagonal entries
\[
D_{ii} = -\sum_{j}\cos{(\theta_j-\theta_i-\alpha)}.
\]
\end{obs}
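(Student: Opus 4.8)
The plan is to obtain the claimed decomposition essentially for free from the proof of Lemma~\ref{linearized flow form lemma}; the only genuinely new content in Observation~\ref{J as rank two perturb} is the identification of the two rank-one summands of $\A$ as outer products with explicitly named vectors. First I would recall that Lemma~\ref{linearized flow form lemma} already establishes $\J = \D + \A$ with $\D$ diagonal, $D_{ii} = -\sum_{j}\cos(\theta_j-\theta_i-\alpha)$, and $a_{ij} = \cos(\theta_j-\theta_i-\alpha)$. So it remains only to re-express $\A$.

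Next I would symmetrize the phase lag by writing $\theta_j-\theta_i-\alpha = (\theta_j-\alpha/2)-(\theta_i+\alpha/2)$ and applying $\cos(p-q)=\cos p\cos q+\sin p\sin q$ with $p=\theta_j-\alpha/2$ and $q=\theta_i+\alpha/2$. This gives
\[
a_{ij} = \cos(\theta_i+\alpha/2)\cos(\theta_j-\alpha/2) + \sin(\theta_i+\alpha/2)\sin(\theta_j-\alpha/2),
\]
which is exactly the two-term expansion already appearing in the proof of the lemma.

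Finally I would observe that for $\uu = [\cos(\theta_i+\alpha/2)]^t$ and $\vv=[\cos(\theta_j-\alpha/2)]^t$ the $(i,j)$ entry of the outer product $\uu\otimes\vv$ is $u_i v_j = \cos(\theta_i+\alpha/2)\cos(\theta_j-\alpha/2)$, and likewise $(\w\otimes\z)_{ij}=\sin(\theta_i+\alpha/2)\sin(\theta_j-\alpha/2)$ for $\w=[\sin(\theta_i+\alpha/2)]^t$, $\z=[\sin(\theta_j-\alpha/2)]^t$. Hence $\A = \uu\otimes\vv + \w\otimes\z$ entrywise, and combining with $\D$ yields $\J = \D + \uu\otimes\vv + \w\otimes\z$; since all matrices involved are $n\times n$ and agree in every entry, the stated identity holds.

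There is essentially no obstacle here: the argument is pure bookkeeping. The one point requiring minor care is the index convention in the outer product — making sure the $i$-dependent factor is carried by the left vector and the $j$-dependent factor by the right vector — together with noting that the diagonal contribution $\cos\alpha$ coming from the $j=i$ term of $a_{ii}$ is consistent with its inclusion (with a minus sign) inside $D_{ii}$, exactly as reconciled in the proof of Lemma~\ref{linearized flow form lemma}.
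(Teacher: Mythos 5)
Your proposal is correct and follows exactly the route the paper itself takes: Observation~\ref{J as rank two perturb} is simply a restatement, with the vectors $\uu,\vv,\w,\z$ named explicitly, of the decomposition already derived in the proof of Lemma~\ref{linearized flow form lemma} via the shift $\theta_j-\theta_i-\alpha=(\theta_j-\alpha/2)-(\theta_i+\alpha/2)$ and the cosine angle-difference identity. Your care with the index convention in the outer products and with the $j=i$ diagonal term matches the paper's bookkeeping, so there is nothing to add.
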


Additionally, examining the original statement of $\J$ \eqref{J} we can easily see that for any $\boldsymbol{\theta}$ all the row sums of the matrix $\J$ are always zero.  This leads to our second observation.

\begin{obs} It is always true that
$\mathbf{J1}= \mathbf{0}$, and thus zero is always an eigenvalue of $\mathbf{J}$.  Thus any stable fixed point of~\eqref{eq:KS} is only semi-stable, in that it has a ``soft mode'' that arises from the translation invariance.  In a different context, it was exactly this semi-stability quality of any fixed point that required the use of the projection onto the mean-zero plane in defining dynamically stable (Definition~\ref{def:phaselocked-dyanamicstabliity}).
\end{obs}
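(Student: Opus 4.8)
The plan is to read both claims straight off the explicit formula~\eqref{J} for $\J$. First I would check that every row of $\J$ sums to zero: in row $i$ the off-diagonal entries are $\cos(\theta_j-\theta_i-\alpha)$ for $j\neq i$, while the diagonal entry is by construction $-\sum_{j\neq i}\cos(\theta_j-\theta_i-\alpha)$, so the row sum cancels identically in $\btheta$ and $\alpha$. Row sums being zero is exactly the statement $\J\1=\0$, hence $\1\in\ker\J$ and $0\in\specop{\J}$ regardless of the underlying configuration. This is of course the infinitesimal version of the $U(1)$ symmetry $\f(\btheta_0+c\1,\alpha)=\f(\btheta_0,\alpha)$ already recorded above: differentiating in $c$ at $c=0$ gives $\J\1=\0$, since (with $\gamma=1$) $\J$ is the Jacobian of $\f$.

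For the second claim, suppose $\btheta_0$ is a fixed point of~\eqref{eq:KS} for some $\bomega$. The linearization about $\btheta_0$ is $d{\bf x}/dt=\J{\bf x}$ with $\J$ evaluated at $\btheta_0$, and by the first part it always has $0$ in its spectrum with eigenvector $\1$. Hence $\btheta_0$ can never be hyperbolic, and in particular it cannot be asymptotically stable as a fixed point of the flow on $\T^n$: a perturbation along $\1$ does not decay. I would make this concrete by noting that the whole line $\btheta_0+c\1$ consists of fixed points, so such a perturbation is simply transported to a neighboring equilibrium. Consequently the strongest form of stability one can ask for is decay of the perturbation modulo the $\1$ direction, which is precisely what is encoded in Definition~\ref{def:phaselocked-dyanamicstabliity} through the projection $\proj_{\1^\perp}$; this ``soft mode'' is an unavoidable artifact of translation invariance rather than a genuine dynamical degeneracy.

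There is essentially no obstacle here: both statements are immediate from~\eqref{J} and the $U(1)$ symmetry. The one point worth stating carefully is that the forced zero eigenvalue does not itself preclude transverse stability --- the restriction of $\J$ to an invariant complement of $\spanop{\1}$ can still have spectrum in the open left half-plane --- and it is exactly this restricted spectrum that the subsequent stability and instability criteria are designed to control.
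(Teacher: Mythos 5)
Your argument is correct and is essentially the paper's own: the observation is justified there by noting, directly from the explicit formula~\eqref{J}, that the diagonal entry of each row is by construction the negative of the sum of the off-diagonal entries, so all row sums vanish, giving $\J\1=\0$ and hence the translation-induced zero eigenvalue that forces the projection onto $\1^\perp$ in Definition~\ref{def:phaselocked-dyanamicstabliity}. Your added remarks (the $U(1)$-symmetry derivation of $\J\1=\0$ and the line of equilibria $\btheta_0+c\1$) are consistent elaborations rather than a different route.
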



The function $\mathbf{f}:\T^n\rightarrow \R^n$ is a natural map from the configuration space $\T^n$ to the frequency space $\R^n$.  Since~\eqref{eq:KS} is invariant under the one-parameter family of rotations $\theta_i\rightarrow \theta_i+s$, we can restrict our work in $\T^n$ to the mean-zero plane, $\sum \theta_i=0$.  In other words, we need only consider $\boldsymbol{\theta}$ an element of the reduced configuration space $\mathcal{T}:=\T^n\bigcap\mathbf{1}^{\perp}$.
\begin{define}
\label{def_fully_synch_soln}
We define $\mathcal{S}_{\theta}$ to be the set of configurations in $\mathcal{T}$ for which the Jacobian $\J=\partial \mathbf{f}/\partial \boldsymbol{\theta}$ \eqref{J} is negative semi-definite with a one dimensional kernel. We define $\mathcal{S}_{\omega}$ to be the set of frequencies in $\R^n$ given by image of $\mathcal{S}_{\theta}$ under $\mathbf{f}$.  That is, $\mathcal{S}_{\omega}:=\mathbf{f}(\mathcal{S}_{\theta})$.\end{define}

 As demonstrated in Figure~\ref{fig:ShadowGraphs}, the range of the map $\mathbf{f}$ is not all of $\R^n$. However, provided we can show that the set $\mathcal{S}_{\theta}$ is non-empty, then the image of the reduced configuration space will be an $n-1$ dimensional surface in $\R^n$.  It is clear that $\mathcal{S}_{\omega}$ is the important object for studying synchronization: All questions about the probability of full synchrony are questions about the size of $\mathcal{S}_{\omega}$ in some measure.  One of the key ingredients in this is a good characterization of $\mathcal{S}_{\omega}$. In standard Kuramoto, it is common at this point to reduce the frequency space of $\boldsymbol{\omega}$ to the mean-zero plane as well, but this is not possible as discussed above.  Nonetheless, $\mathbf{f}$ remains a well-defined map for the Kuramoto-Sakaguchi model between $\mathcal{S}_{\theta}$ and $\mathcal{S}_{\omega}$.  Colloquially, $\mathcal{S}_{\omega}$ can be thought of as a graph over $\mathcal{S}_{\theta}$ and in the orientation of Figure~\ref{fig:ShadowGraphs} we will see that $\mathcal{S}_{\omega}$ is the bottom surface of the object.

Finally, note that $f(\mathbf{0})=\mathbf{0}$, the derivative of $\mathbf{f}$ is the Jacobian $\mathbf{J}$, and the dimension of the kernel of $\mathbf{J}$ is one at the origin, so at least in a neighborhood of the origin, $S_\omega$ is a manifold of the same dimension as $S_\theta$.

%
%


\section{Characterization of the Stable Set}

Recall that for any $\boldsymbol{\theta}$, the Jacobian matrix $\J$ \eqref{J} has the property that the row sums are always zero.  Hence $0$ is always an eigenvalue of $\J$.  Determining the stable and unstable regions of $\T^n$ is a matter of determining the real part of the rest of the eigenvalues associated with $\J$.  By Observation~\ref{J as rank two perturb}, we know that $\J$ can be decomposed into $\J = \D + \uu\otimes\vv + \w\otimes\z$ where the diagonal entries of $\D$ are of the form $D_{ii} = -\sum_{j}\cos{(\theta_j-\theta_i-\alpha)}$.  In \cite{Bro.DeV.Park.2012}, the eigenvalue analysis was accomplished using a homotopy argument in the spirit of the Birman--Schwinger Principle.  Consider the one parameter family of operators
\begin{align}
\label{J of s operator}
\J(s):=\D+s(\uu\otimes\vv + \w\otimes\z).
\end{align}
Clearly $\J(0)=\D$ and $\J(1)=\J$.  For the standard Kuramoto model, the drift of the eigenvalues was able to be accurately detected via changes in the size of the kernel of $\J(s)$ as $s$ increased from 0 to 1.  The key component of that homotopy argument relied on the fact that, when $\alpha=0$, $\A$ is a positive definite self-adjoint matrix.  This fails for the Kuramoto-Sakaguchi model, since the matrix $\A= \uu\otimes\vv + \w\otimes\z$ is no longer self-adjoint in general.

In Section~\ref{stability result} we overcome the asymmetry of $\J$ by a different approach using Perron--Frobenius.  In Section~\ref{index-thm-instability}, we return to the homotopy argument to derive an index theorem that gives a more complete description of the eigenvalue drift for any $\boldsymbol{\theta}$ in the reduced configuration space $\mathcal{T}:=\T^n\bigcap\mathbf{1}^{\perp}$.  In the end, this yields a nice characterization for much of the unstable region in $\mathcal{T}$.

\subsection{A Stability Result} \label{stability result}

For any $|\alpha|<\pi/2$, there is a neighborhood of $\mathbf{0}$ such that all the diagonal entries $D_{ii}$ are negative.  Since $\mathbf{J}$ is not symmetric, as $s$ increases from zero to one in~\eqref{J of s operator}, the eigenvalues need not be monotone increasing.  However, for a particular subset of $\mathcal{T}$, we can control the top eigenvalue.  If we can then show that the top eigenvalue is 0, then we will necessarily have a $\theta$ that must be in the set of $\mathcal{S}_{\theta}$. To do so, we need a couple of corollaries to the Perron--Frobenius Theorem~\cite[Section 8.4]{Horn.Johnson.book}.  For completeness, we will state the relevant portion of the theorem:
\begin{thm}[Perron--Frobenius]
  Let $M$ be a matrix with positive entries.  Then the spectral radius $\rho(M)$ is a simple eigenvalue of $M$.  The left  and right eigenvectors with eigenvalue $\rho(M)$ have components of the same sign and thus without loss of generality can be chosen to to have positive entries.  Moreover,
  \begin{equation*}
  \min_i \sum_j m_{ij} \le \rho(M) \le \max_i \sum_j m_{ij}.
  \end{equation*}
\end{thm}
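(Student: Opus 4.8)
The plan is to establish the existence of a strictly positive right eigenvector first, and then extract every remaining assertion from it by positivity-based comparison arguments. For existence I would consider the map $x \mapsto Mx/\|Mx\|_1$ on the standard simplex $\Delta = \{x \ge 0 : \sum_i x_i = 1\}$: it is well-defined and continuous because $Mx$ has strictly positive entries for every $x \in \Delta$, so Brouwer's fixed point theorem yields $v \in \Delta$ with $Mv = \lambda v$ and $\lambda = \|Mv\|_1 > 0$; writing $v = \lambda^{-1}Mv$ shows $v$ has all positive entries. (Alternatively one could run the Collatz--Wielandt argument, maximizing $\min_{i : x_i > 0} (Mx)_i / x_i$ over $\Delta$; I would use whichever is shorter to write out.)

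Next I would identify $\lambda$ with $\rho(M)$. Applying the existence step to $M^t$ gives a strictly positive left eigenvector $u$ with $u^t M = \mu u^t$, and pairing $u^t M v = \mu u^t v = \lambda u^t v$ with $u^t v > 0$ forces $\mu = \lambda$. For an arbitrary eigenpair $Mw = \nu w$, $w \ne 0$, the triangle inequality gives $|\nu|\,|w| \le M|w|$ componentwise (with $|w|$ the vector of moduli), and pairing with $u$ yields $|\nu|\,u^t|w| \le u^t M |w| = \lambda u^t |w|$, hence $|\nu| \le \lambda$; since $\lambda$ is itself an eigenvalue, $\lambda = \rho(M)$. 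The two-sided bound then falls out of $\lambda v_i = \sum_j m_{ij} v_j$: evaluating at the index maximizing $v_i$ gives $\lambda \le \max_i \sum_j m_{ij}$, and at the index minimizing $v_i$ gives $\lambda \ge \min_i \sum_j m_{ij}$.

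Finally I would prove simplicity. Geometric simplicity: if $Mw = \lambda w$ with $w$ not a complex multiple of $v$, then replacing $w$ by a suitable real part produces a real $\lambda$-eigenvector not parallel to $v$, so for an appropriate scalar $t$ the vector $v + tw$ is nonnegative with a vanishing coordinate; but $M(v+tw) = \lambda(v+tw)$ while $M$ sends every nonzero nonnegative vector to a strictly positive one, a contradiction. Algebraic simplicity: if $Mw = \lambda w + v$ for some generalized eigenvector $w$, then pairing with $u$ gives $\lambda u^t w = u^t M w = \lambda u^t w + u^t v$, forcing $u^t v = 0$, which contradicts $u, v > 0$. The sign statement is then immediate, since geometric simplicity makes any $\lambda$-eigenvector of $M$ or of $M^t$ a scalar multiple of the positive vector built above.

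I expect the existence step to be the only real obstacle: it is the one part that is not a two-line manipulation, requiring either a topological input (Brouwer) or a compactness-and-optimization argument. Once the positive right and left eigenvectors are in hand, the identification with the spectral radius, the row-sum bounds, and both forms of simplicity all follow from short comparisons that exploit strict positivity.
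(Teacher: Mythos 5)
Your proof is correct, but it is worth noting that the paper does not prove this statement at all: it is stated ``for completeness'' and attributed to the literature (Horn and Johnson, Section 8.4), since only the corollary about matrices with positive off-diagonal entries is actually used downstream. Your argument is therefore a genuinely self-contained alternative to the citation. The route you take --- Brouwer's fixed point theorem applied to $x \mapsto Mx/\|Mx\|_1$ on the simplex to produce a strictly positive eigenpair, then duality with the positive left eigenvector $u$ to identify $\lambda$ with $\rho(M)$ (via $|\nu|\,u^t|w| \le u^t M|w| = \lambda\,u^t|w|$), the row-sum bounds from evaluating $\lambda v_i = \sum_j m_{ij}v_j$ at the extremal coordinates of $v$, and simplicity split into the boundary-of-the-positive-cone argument (geometric) plus the Jordan-chain pairing $u^t v = 0$ (algebraic) --- is a standard but complete and correct assembly; the textbook treatment instead typically runs through the Collatz--Wielandt variational characterization or power-iteration limits, which avoids topological input but is longer to set up. Two small points you gloss over, both easily repaired: in the geometric-simplicity step, when the real eigenvector $w$ is itself nonnegative you must take $t$ negative to reach the boundary of the cone, and you should note $v + tw \neq 0$ there (immediate since $w$ is not parallel to $v$); and in the algebraic-simplicity step the normalization $Mw = \lambda w + v$ tacitly uses the already-established geometric simplicity to rescale the Jordan chain, so the order of the two sub-arguments matters. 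Neither affects correctness. If you wanted your proof to serve the paper's actual needs, the same positivity arguments adapt directly to Corollary~\ref{cor:PF} by the shift $B = M + cI$, exactly as the paper does there.
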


Although Perron--Frobenius is typically stated for a matrix with all positive entries, the really important mechanism is the positivity of the off-diagonal entries.  Specifically, we have:
\begin{corr}\label{cor:PF}
\begin{enumerate}
\item Let $M$ be a zero row sum matrix with positive off-diagonal entries.  Then zero is a simple eigenvalue of $M$, all other eigenvalues have negative real parts, and the vectors in the left and right nullspace have all positive entries.

\item Let $M$ be a matrix with positive off-diagonal entries and negative row sums.  Then the eigenvalue of $M$ with largest real part is itself real, negative, and no larger than the largest row sum. Its associated eigenvectors have all positive entries.
\end{enumerate}
\end{corr}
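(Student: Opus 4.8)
The plan is to deduce both statements from the quoted Perron--Frobenius theorem by a diagonal shift. In each case, since the off-diagonal entries of $M$ are already positive, we may fix a constant $c>0$ large enough that $N:=M+cI$ has all entries positive.

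\emph{Part (1).} Here the row sums of $M$ all vanish, so every row sum of $N$ equals $c$, and the Perron--Frobenius bound $\min_i\sum_j N_{ij}\le\rho(N)\le\max_i\sum_j N_{ij}$ forces $\rho(N)=c$. By Perron--Frobenius, $c$ is a simple eigenvalue of $N$ and its left and right eigenvectors may be taken to have all positive entries. Translating back by $-cI$ — an operation that preserves eigenvectors and algebraic multiplicities — shows that $0$ is a simple eigenvalue of $M$ with positive left and right null vectors (consistently with $M\mathbf{1}=\mathbf{0}$). For the remaining eigenvalues: if $\lambda\in\operatorname{spec}(M)$ then $\lambda+c\in\operatorname{spec}(N)$, so $|\lambda+c|\le\rho(N)=c$; hence every eigenvalue of $M$ lies in the closed disk of radius $c$ centered at $-c$. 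That disk is contained in the closed left half-plane and meets the imaginary axis only at the origin, so every eigenvalue $\lambda\ne 0$ satisfies $\operatorname{Re}\lambda<0$. Note that this last step uses only $|\lambda+c|\le\rho(N)$, i.e.\ the definition of the spectral radius, so the stated form of Perron--Frobenius is sufficient even though it does not explicitly assert that the Perron root strictly dominates every other eigenvalue in modulus.

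\emph{Part (2).} Now let $r_i=\sum_j M_{ij}<0$ and $r_{\max}=\max_i r_i<0$. The row sums of $N$ are $r_i+c$, so the Perron--Frobenius bound gives $\rho(N)\le r_{\max}+c$. For a matrix with positive entries, every eigenvalue $\mu$ obeys $\operatorname{Re}\mu\le|\mu|\le\rho(N)$, and $\rho(N)$ is itself a (real) eigenvalue; consequently the eigenvalue of $N$ of largest real part is exactly $\rho(N)$, and it is the only eigenvalue with real part equal to $\rho(N)$ (any such $\mu$ would satisfy $|\mu|\ge\rho(N)\ge|\mu|$, forcing $\mu=\rho(N)$). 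Shifting back by $-cI$, the eigenvalue of $M$ of largest real part is $\lambda^{*}=\rho(N)-c$, which is real, satisfies $\lambda^{*}\le r_{\max}<0$, and — being the Perron root of $N$ minus $c$ — has the same positive left and right eigenvectors as that Perron root. This establishes part (2).

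I expect no serious obstacle; the only point to watch is that the quoted Perron--Frobenius statement guarantees simplicity of the Perron root and the row-sum bounds, but not verbatim that the Perron root strictly exceeds the modulus of every other eigenvalue. As arranged above, the half-plane conclusion in part (1) and the ``largest real part'' conclusion in part (2) each need only the inequality $|\mu|\le\rho(N)$ together with $\rho(N)\in\operatorname{spec}(N)$, so this gap is harmless.
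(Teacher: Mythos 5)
Your proof is correct and follows essentially the same route as the paper: shift to $N=M+cI$ with positive entries, apply the row-sum bounds and simplicity from Perron--Frobenius, and translate the spectrum back by $-cI$. The only difference is that you spell out details the paper leaves implicit (the disk $\left|\lambda+c\right|\le c$ argument for the half-plane claim in part (1) and the identification of the largest-real-part eigenvalue in part (2)), which is a welcome but not substantively different elaboration.
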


\begin{proof}
  Let $M$ be a matrix with positive off-diagonal entries.  Define $B=M+cI$, where $c$ chosen large enough to make all of the entries of $B$ positive.  If $M$ has zero row sums, then all of the row sums of $B$ are $c$, and therefore $\rho(B) = c$ is a simple eigenvalue whose associate eigenvectors can be chosen to have positive entries.  Thus $0$ is a simple eigenvalue of $M$, and the associated eigenvectors can be chosen to have positive entries.  Since $M=B-cI$, these eigenvectors are also eigenvectors of $M$.

  Similarly, if $M$ has negative row sums, then all of the row sums of $B$ are strictly less than $c$, and therefore the top eigenvalue of $M$ is negative.
\end{proof}

In order to use these results on $\J(s)$, we will need to be slightly more careful about the region in $\mathcal{T}$ that we use.  Recall that $\A=[\cos(\theta_j-\theta_i-\alpha)]$ \eqref{eq:defofA}.  To use the corollary, we need to restrict $\boldsymbol{\theta}$ further.
\begin{define}
\label{Lees stable region}
We define $\mathcal{S_{\theta}^{\dag}}$ to be the set of configurations in $\mathcal{T}$ such that $\cos(\theta_i - \theta_j - \alpha) > 0$ for all $i,j$.
\end{define}
Note that for any element of $\mathcal{S_{\theta}^{\dag}}$, the diagonal of $\D$ will have all negative entries.

\begin{thm}[$\mathcal{S_{\theta}}$ is non-empty]
\label{J eta is stable on Lees region}
For any $\boldsymbol{\theta}\in \mathcal{S_{\theta}^{\dag}}$, the matrix $\J(s)$ is stable for all $s\in[0,1]$ and unstable for $s>1$.  Thus $\mathcal{S_{\theta}^{\dag}}\subseteq\mathcal{S_{\theta}}$, the set of fully synchronous solutions to the Kuramoto-Sakaguchi model.  In particular, for $\av\alpha<\pi/2$, the set $\mathcal{S_{\theta}^{\dag}}$ and thus $\mathcal{S_{\theta}}$ is nonempty.
\end{thm}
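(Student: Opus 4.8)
The plan is to read all the required spectral information directly off the row sums and off-diagonal entries of $\J(s)=\D+s\A$, and then feed it to Corollary~\ref{cor:PF} (and, for $s>1$, to Perron--Frobenius directly). Two elementary bookkeeping facts do all the work. First, a configuration $\boldsymbol{\theta}\in\mathcal{S}_\theta^\dag$ satisfies $\cos(\theta_i-\theta_j-\alpha)>0$ for \emph{all} pairs $i,j$; swapping the roles of $i$ and $j$ shows that every entry $a_{ij}=\cos(\theta_j-\theta_i-\alpha)$ of $\A$ is strictly positive, so the off-diagonal entries $s\,a_{ij}$ of $\J(s)$ are strictly positive for every $s>0$. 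Second, writing $r_i:=\sum_j\cos(\theta_j-\theta_i-\alpha)>0$, we have $D_{ii}=-r_i$, so the $i$-th row sum of $\J(s)$ is $-r_i+s\,r_i=(s-1)r_i$. Hence the row sums of $\J(s)$ are all strictly negative for $s<1$, all zero for $s=1$, and all strictly positive for $s>1$.

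With these facts in hand the three regimes are immediate. For $s\in(0,1)$ the matrix $\J(s)$ has positive off-diagonal entries and strictly negative row sums, so part~(2) of Corollary~\ref{cor:PF} gives that its eigenvalue of largest real part is real and negative; for $s=0$, $\J(0)=\D$ is diagonal with entries $-r_i<0$, which is trivially stable. Thus $\J(s)$ is (strictly) stable throughout $[0,1)$. For $s=1$, $\J(1)=\J$ has positive off-diagonal entries and zero row sums, so part~(1) of the corollary gives that $0$ is a \emph{simple} eigenvalue of $\J$ and every other eigenvalue has strictly negative real part --- this is exactly the one-dimensional-kernel, "no growing mode" stability property that places $\boldsymbol{\theta}$ in $\mathcal{S}_\theta$. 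Therefore $\mathcal{S}_\theta^\dag\subseteq\mathcal{S}_\theta$. For $s>1$ the corollary no longer applies since the row sums are positive, so I would argue directly: pick $c>0$ with $B:=\J(s)+cI$ entrywise positive; the Perron--Frobenius bound gives $\rho(B)\ge\min_i\sum_j b_{ij}=c+(s-1)\min_i r_i>c$, and since for a positive matrix the Perron root is the eigenvalue of largest real part, $\J(s)=B-cI$ has an eigenvalue with real part at least $(s-1)\min_i r_i>0$. Hence $\J(s)$ is unstable for $s>1$.

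Finally, for non-emptiness: the origin $\mathbf{0}\in\mathcal{T}=\T^n\cap\mathbf{1}^{\perp}$ satisfies $\cos(0-0-\alpha)=\cos\alpha>0$ whenever $\av\alpha<\pi/2$, so $\mathbf{0}\in\mathcal{S}_\theta^\dag$ (and, by continuity of $\cos$, so is an entire open neighborhood of $\mathbf{0}$); combined with the inclusion just established, $\mathcal{S}_\theta$ is nonempty. The one place that needs genuine care is the regime $s>1$: because $\J$ is truly non-symmetric one cannot run a variational or quadratic-form argument, and the "negative row sum" half of Corollary~\ref{cor:PF} is unavailable, so the instability has to be extracted from the strict lower bound $\rho(B)>c$ on the Perron root of $B$ rather than from any eigenvalue-monotonicity statement. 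Everything else in the proof reduces to the two row-sum/off-diagonal identities recorded above.
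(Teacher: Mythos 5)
Your proof is correct, and it reaches the conclusion by a route that differs from the paper's in one essential step. The paper also invokes Corollary~\ref{cor:PF} to know that the top eigenvalue $\lambda_1(s)$ of $\J(s)=\D+s\A$ is real with positive left and right Perron eigenvectors $y(s),x(s)$, but its key mechanism is a Hellmann--Feynman computation: differentiating $\lambda_1(s)=\ip{y(s)}{(\D+s\A)x(s)}$ gives $\lambda_1'(s)=\ip{y(s)}{\A x(s)}>0$, so the top eigenvalue drifts monotonically upward in $s$, vanishes exactly at $s=1$ (where the row sums are zero), and is therefore negative on $[0,1)$ and positive for $s>1$. You instead read everything off the row sums $(s-1)r_i$ at each fixed $s$: part (2) of the corollary for $s\in(0,1)$, part (1) at $s=1$, and for $s>1$ a direct Perron bound $\rho(\J(s)+cI)\ge c+(s-1)\min_i r_i$ after a diagonal shift. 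Your version is more elementary --- it avoids differentiating an eigenvalue branch and its eigenvector normalization (which the paper justifies only implicitly via simplicity of the Perron root), it makes the $s>1$ instability explicit with a quantitative lower bound on the unstable eigenvalue, and it correctly notes that the negative-row-sum half of the corollary is unavailable there. What the paper's derivative argument buys in exchange is the monotone eigenvalue-drift picture along the homotopy $\J(s)$, which is the viewpoint carried forward into the index theorem of Section~\ref{index-thm-instability}. Your non-emptiness observation ($\mathbf{0}\in\mathcal{S}_{\theta}^{\dag}$ since $\cos\alpha>0$) matches the paper's intent.
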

\begin{proof}
For each $s\in[0,1]$, the matrix $D+s A$ satisfies the assumptions of Corollary~\ref{cor:PF}, and we can write the left and right eigenvectors as $y(s), x(s)$ (we always choose the normalization that $\ip{y(s)}{x(s)} = 1$).  Note that for $s\in[0,1)$, the row sums of $A$ are strictly negative, and for $s=1$ they are zero.

Let us denote $\lambda_1(s)$ as the top eigenvalue of $D+s A$, so we have
\begin{equation*}
(D+s A)x(s) = \lambda_1(s)x(s)
\end{equation*}
and thus
\begin{equation*}
\ip{y(s)}{(D+s A)x(s)}  = \lambda_1(s)\ip{y(s)}{x(s)}= \lambda_1(s).
\end{equation*}
Differentiating this equation gives
\begin{align*}
  \lambda_1'(s)
  	&= \ip{y'(s)}{(D+s A)x(s)} + \ip{y(s)}{Ax(s)} + \ip{y(s)}{(D+s A)'x(s)}\\
	&= \lambda_1(s)\ip{y'(s)}{x(s)}	+ \ip{y(s)}{Ax(s)} +\lambda_1(s)\ip{y(s)}{x'(s)}\\
	&= \lambda_1(s) \frac{d}{ds} \ip{y(s)}{x(s)} + \ip{y(s)}{Ax(s)}\\
	&= \ip{y(s)}{Ax(s)}.
\end{align*}
In particular, notice that since the entries of $x,y,A$ are all positive, then $\lambda_1(s)$ is increasing in $s$.  Thus $\lambda_1(1)=0$ and $J(1)$ is negative semi-definite.

\end{proof}

\subsection{An Index Theorem -- Instability} \label{index-thm-instability}

We recognize that $\mathcal{S_{\theta}^{\dag}}$ is not necessarily a complete description of $\mathcal{S_{\theta}}$.  In fact, there are likely stationary solutions in $\mathcal{T}$ that are not stable, and thus not in $\mathcal{S_{\theta}}$. To further understanding the stability properties of the stationary solutions, we return to the one-parameter family of matrices $\J(s)=\D+s \A$ and we define the following index.
\begin{define}
We define $n_+(\J)$ to be the number of eigenvalues $\lambda_i(\J)$ in
the open positive half-plane ${\rm Re}(\lambda)>0$ (counted according to algebraic multiplicity).
\end{define}
Our goal is to detect eigenvalue crossings into the right-half plane as $s$ increases from 0 to 1.  A reasonably straightforward linear algebra calculation gives a nice representation of the characteristic polynomial of $\J(s)$.

\begin{lem}\label{lem:det of J(s)}
Define $P_{\J}(s)=\det(\J(s))$. Then we have that
\begin{enumerate}
\item $P_{\J}(s)$ is a quadratic polynomial in $s$ given explicitly by
\begin{equation}
\label{quad_eta_poly}
\begin{split}
     P_{\J}(s)
=  1&+\l( \langle\mb{v}, \mb{D}^{-1}\mb{u}\rangle + \langle\mb{z}, \mb{D}^{-1}\mb{w}\rangle \r)s \\
 & + \l(  \langle\mb{v}, \mb{D}^{-1}\mb{u}\rangle\langle\mb{z}, \mb{D}^{-1}\mb{w}\rangle -  \langle\mb{z}, \mb{D}^{-1}\mb{u}\rangle\langle\mb{v}, \mb{D}^{-1}\mb{w}\rangle \r)s^2.
\end{split}
\end{equation}
\item $s=1$ is a root of  $P_{\J}(s)$, and thus both roots are real.
\item At each root of the polynomial $P_{\J}(s)$ the matrix $\J$ is singular, and $\lambda=0$ is an eigenvalue of $\J(s)$ with algebraic and geometric multiplicity $1$ unless $s=1$ is a double root.   \end{enumerate}
\end{lem}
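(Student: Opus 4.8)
The plan is to treat the three claims in order, using the factorization $\A=\uu\otimes\vv+\w\otimes\z$ throughout and writing $U=[\,\uu\ \ \w\,]$, $V=[\,\vv\ \ \z\,]$ for the associated $n\times 2$ matrices, so that $\A=UV^{t}$. For~(1): on the region where the analysis takes place --- a neighborhood of $\mathbf 0$, or more generally any $\btheta$ with $\cos(\theta_i-\theta_j-\alpha)>0$ for all $i,j$ --- the diagonal entries $D_{ii}=-\sum_j\cos(\theta_j-\theta_i-\alpha)$ are strictly negative, so $\D$ is invertible, and Sylvester's determinant identity gives $\det\J(s)=\det(\D+sUV^{t})=\det(\D)\,\det(I_2+s\,V^{t}\D^{-1}U)$. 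The matrix $V^{t}\D^{-1}U$ is $2\times 2$ with entries $\langle\vv,\D^{-1}\uu\rangle$, $\langle\vv,\D^{-1}\w\rangle$, $\langle\z,\D^{-1}\uu\rangle$, $\langle\z,\D^{-1}\w\rangle$, so expanding its $2\times2$ determinant reproduces the right-hand side of~\eqref{quad_eta_poly}; hence $\det\J(s)=\det(\D)$ times that quadratic, and after normalizing by the nonzero constant $\det\D$ one gets $P_{\J}$ exactly as displayed, with $\deg P_{\J}\le 2$ and $P_{\J}(0)=1$. (Degree at most two is forced in any case, with or without $\D$ invertible, by the column expansion $\det(\D+s\A)=\sum_{S\subseteq\{1,\dots,n\}}s^{|S|}\det(\A[S,S])\prod_{i\notin S}D_{ii}$, since every $k\times k$ minor of the rank-$\le 2$ matrix $\A$ vanishes for $k\ge 3$.)

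For~(2): as already observed, the row sums of $\J=\J(1)$ vanish, so $\J(1)\mathbf 1=\mathbf 0$, hence $\det\J(1)=0$ and $s=1$ is a root of $P_{\J}$. All coefficients of $P_{\J}$ are real, so a genuine real quadratic with one real root has nonnegative discriminant and its other root is real as well; and if the leading coefficient vanishes, $P_{\J}$ is linear with the single real root $s=1$. Either way both roots are real.

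For~(3): at a root $s_*$ of $P_{\J}$ one has $\det\J(s_*)=\det(\D)P_{\J}(s_*)=0$, and $s_*\neq 0$ since $P_{\J}(0)=1$, so $\J(s_*)$ is singular; write $M=V^{t}\D^{-1}U$. For the geometric multiplicity, check directly that $x\mapsto V^{t}x$ maps $\ker\J(s_*)$ isomorphically onto $\ker(I_2+s_*M)$, with inverse $\mathbf c\mapsto -s_*\D^{-1}U\mathbf c$: if $\J(s_*)x=0$ then $x=-s_*\D^{-1}U(V^{t}x)$, so $(I_2+s_*M)(V^{t}x)=0$; and conversely $\J(s_*)(\D^{-1}U\mathbf c)=U(I_2+s_*M)\mathbf c$, with $V^{t}\D^{-1}U\mathbf c=M\mathbf c=-\mathbf c/s_*$ on $\ker(I_2+s_*M)$. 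Thus $\dim\ker\J(s_*)=\dim\ker(I_2+s_*M)\le 1$ for the $2\times2$ matrix $M$, unless $I_2+s_*M=0$, i.e.\ $M=-\tfrac1{s_*}I_2$; by~(1) that makes $P_{\J}(s)=(1-s/s_*)^{2}$, a double root, which --- since $s=1$ is always a root --- must be the root $s=1$. So the geometric multiplicity of $0$ at every root equals $1$ except when $s=1$ is a double root, exactly as stated.

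The step I expect to cost the most work is the matching claim for the algebraic multiplicity. Near a root $s_*$, factoring $\det\J(s)=\det(\D)P_{\J}(s)=\prod_i\lambda_i(s)$ and noting that the branches bounded away from $0$ contribute a nonzero factor shows that the order of vanishing of $P_{\J}$ at $s_*$ equals the total order with which the branches tending to $0$ vanish there; at a simple root that total is $1$, which is still compatible with a single Jordan block of size $m\ge 2$ splitting as an $m$-cycle $\sim(s-s_*)^{1/m}$. To rule that out I would show the left and right null vectors $q,p$ of $\J(s_*)$ satisfy $\langle q,p\rangle\neq 0$ --- equivalently, that the eigenvalue branch of the resolvent matrix $\mathcal M(\lambda)=V^{t}(\lambda I-\D)^{-1}U$ through $\lambda=0$ has nonzero $\lambda$-derivative --- using that the rows of $U$ and of $V$ lie on unit circles: the $2\times2$ minor of $U$ on rows $i,j$ and the one of $V$ on rows $i,j$ both equal $\sin(\theta_j-\theta_i)$, so their product is $\sin^{2}(\theta_i-\theta_j)\ge 0$, and since $\cos\alpha>0$ the trace $\sum_i\cos\alpha/(\lambda-D_{ii})$ and the determinant $\sum_{i<j}\sin^{2}(\theta_i-\theta_j)/[(\lambda-D_{ii})(\lambda-D_{jj})]$ of $\mathcal M(\lambda)$ have controlled sign and monotonicity near $\lambda=0$. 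That positivity input is the crux; should $\langle q,p\rangle$ nonetheless degenerate on a lower-dimensional exceptional set, the clean statement ``algebraic multiplicity $1$'' should be read as holding off that closed measure-zero subset of $\mathcal T$ (or under the extra hypothesis that $0$ is a non-defective eigenvalue of $\J(s_*)$), since only the geometric bound is needed cleanly for the index count that follows.
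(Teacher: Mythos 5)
Your treatment of parts (1) and (2) is correct and is essentially the paper's own computation in a cleaner package: the paper derives the same $2\times 2$ matrix $I_2+sV^{t}(\D-\lambda I)^{-1}U$ by pairing the eigenvalue equation with $\vv$ and $\z$, and then (implicitly at $\lambda=0$) defines $P_{\J}$ to be its determinant; your Weinstein--Aronszajn/Sylvester identity $\det(\D+sUV^{t})=\det(\D)\det(I_2+sV^{t}\D^{-1}U)$ is the same reduction, with the advantage that it makes explicit the normalization $\det\J(s)=\det(\D)\,P_{\J}(s)$ (the paper's opening ``$P_{\J}(s)=\det(\J(s))$'' and the displayed formula with constant term $1$ differ by the factor $\det\D$, which the paper passes over silently) and makes clear where invertibility of $\D$ enters; your rank argument for $\deg P_{\J}\le 2$ without invertibility is a harmless extra. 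Part (2) is identical to the paper. For part (3) the paper offers no argument at all, so your kernel correspondence $x\mapsto V^{t}x$ between $\ker\J(s_*)$ and $\ker(I_2+s_*M)$, $M=V^{t}\D^{-1}U$, is a genuine addition and correctly settles the geometric-multiplicity statement.

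Your instinct that the algebraic-multiplicity claim is the real difficulty is right, and in fact that claim is false as stated, so neither your sketch nor anything else can close it without an extra hypothesis. Take $n=2$, $\theta_1-\theta_2=\pi/2$, $0<\alpha<\pi/2$, $\alpha\neq\pi/4$: then $\J=\sin\alpha\,\bigl(\begin{smallmatrix}1&-1\\ 1&-1\end{smallmatrix}\bigr)$ is nilpotent of rank one, so $\lambda=0$ has geometric multiplicity $1$ but algebraic multiplicity $2$, while $\det(\D+s\A)=(s-1)\bigl[(s-1)\cos^{2}\alpha+(s+1)\sin^{2}\alpha\bigr]$ has the simple root $s=1$ (the other root is $\cos 2\alpha\neq 1$) and $\D$ is invertible; consistently, the eigenvalue branches behave like $\sqrt{s-1}$, so $d\lambda_0/ds$ does not exist at $s=1$. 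The positivity route you propose cannot repair this: by Cauchy--Binet, the $\lambda$-derivative of $\det\bigl(V^{t}(\lambda I-\D)^{-1}U\bigr)$ at $\lambda=0$ carries factors $D_{ii}+D_{jj}$, whose signs are uncontrolled once one leaves $\mathcal{S}_{\theta}^{\dag}$, and in the example the relevant derivative vanishes exactly. So your hedged reading --- algebraic multiplicity $1$ off an exceptional set, or under the added hypothesis that $0$ is a non-defective eigenvalue --- is the correct statement of part (3), and it is what the subsequent index theorem effectively uses, since that theorem separately assumes $s=1$ is a simple root and presupposes a differentiable branch $\lambda_0(s)$ with a well-defined sign of $d\lambda_0/ds$ at $s=1$.
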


\begin{proof}
We begin by computing $P_{\J}(s)$.  Consider the eigenvalue problem $(\D+s \A)\mb{x}=\lambda \mb{x}$.  (For notational convenience, we suppress the parametric dependence of $\J$, $\mb{x}$ and $\lambda$ on $s$ throughout.)  Rather than computing directly, we take advantage of the structure of the adjacency matrix $\A$.  So
\begin{align*}
     (\mb{D}+s(\uu\otimes\vv + \w\otimes\z))\mb{x} &= \lambda\mb{x}.
\end{align*}
Note that is is not possible that a right eigenvector of $\D$ is also orthogonal to both $\mb{v}$ and $\mb{z}$.  Recall from Obs.\ \ref{J as rank two perturb} that $\vv= [\cos{(\theta_j-\alpha/2)}]^t$ and $\z= [\sin{(\theta_j-\alpha/2)}]^t$.  Hence for any fixed $\boldsymbol{\theta}$ and $\alpha$, it is impossible that $\bf \langle v,x\rangle$ and $\bf \langle z,x\rangle$ both vanish.  Now choose a right eigenvector $\bf x$ of $\J$ that is not also a right eigenvector of $\D$.  That is, $\mb{ Dx}\neq \lambda \mb{x}$.  Then the eigenvalue problem can be written
\begin{align*}
     (\mb{D}-\lambda)\mb{x} &= -s(\uu\otimes\vv + \w\otimes\z)\mb{x}\\
                    &= -s\mb{u}\langle \mb{v},\mb{x}\rangle - s\mb{w}\langle \mb{z},\mb{x}\rangle
\end{align*}

Since $\bf x$ is not an eigenvector of $\bf D$, we can construct a recursive representation of $\bf x$ of the form
\[
 \mb{x}=-s(\mb{D}-\lambda)^{-1}\mb{u}\langle \mb{v},\mb{x}\rangle-s(\mb{D}-\lambda)^{-1}\mb{w}\langle \mb{z},\mb{x}\rangle
\]
In turn, this equation can be used to find necessary conditions required upon $\lambda$ in terms of constraint equations in the inner products $\bf \langle v,x\rangle$ and $\bf \langle z,x\rangle$.  Taking the left inner product with respect to $\bf v$ yields the equation
\[
\langle \mb{v},\mb{x} \rangle=- s\langle \mb{v},(\mb{D}-\lambda)^{-1}\mb{u}\rangle \langle \mb{v},\mb{x}\rangle - s\langle \mb{v},(\mb{D}-\lambda)^{-1}\mb{w}\rangle\langle \mb{z},\mb{x}\rangle.
\]
Considering this equation as in the unknown scalars $\bf \langle v,x\rangle$ and $\bf \langle z,x\rangle$, it can be written
\[
(1+s\langle \mb{v},(\mb{D}-\lambda)^{-1}\mb{u}\rangle)\langle \mb{v},\mb{x}\rangle+s\langle \mb{v},(\mb{D}-\lambda)^{-1}\mb{w}\rangle\langle \mb{z},\mb{x}\rangle  = 0.
\]
Similarly, the left inner product with respect to $\bf z$ yields
\[
s\langle \mb{z},(\mb{D}-\lambda)^{-1}\mb{u}\rangle\langle \mb{v},\mb{x}\rangle +(1+s\langle \mb{z},(\mb{D}-\lambda)^{-1}\mb{w}\rangle)\langle \mb{z},\mb{x}\rangle  = 0
\]
and together we have a system of two equations in two unknowns. Again, for a eigenvector $\mathbf{x}$, $\bf \langle v,x\rangle$ and $\bf \langle z,x\rangle$ can not both be zero. Thus, the coefficient matrix
\[
 \left[\begin{array}{cc}
 1+s\langle \mb{v},(\mb{D}-\lambda)^{-1}\mb{u}\rangle  & s\langle \mb{v},(\mb{D}-\lambda)^{-1}\mb{w}\rangle \\
 s\langle \mb{z},(\mb{D}-\lambda)^{-1}\mb{u}\rangle &  1+s\langle \mb{z},(\mb{D}-\lambda)^{-1}\mb{w}\rangle
\end{array}\right]
\]
must be singular.  In other words, if $\lambda$ is an eigenvalue of $\bf J$, then $\lambda$ must satisfy the condition
\[
\det{\left(\left[\begin{array}{cc}
1+s\langle \mb{v},(\mb{D}-\lambda)^{-1}\mb{u}\rangle  & s\langle \mb{v},(\mb{D}-\lambda)^{-1}\mb{w}\rangle \\
s\langle \mb{z},(\mb{D}-\lambda)^{-1}\mb{u}\rangle & 1+s\langle \mb{z},(\mb{D}-\lambda)^{-1}\mb{w}\rangle
\end{array}\right]\right)} = 0.
\]
We define $P_{\J}(s)$ to be this determinant.

When $s=1$, we are considering the original Jacobian matrix $\eqref{J}$.  Note that by the definition of $\J$, $\J$ always has a zero eigenvalue, with right eigenvector $(1,1,\ldots,1)$.  Thus $s=1$ is a root of $P_{\J}(s)$ and both roots of $P_{\J}(s)$ must be real.
\end{proof}

While the non-self-adjoint nature of the operator makes it difficult to get results as sharp as those in the standard Kurammoto model, we can establish a
sufficient condition for instability. The main observation here is that, since $\J(s)$ is real the eigenvalues
occur in complex conjugate pairs, and thus the index $n_+(\J(s))$ can
change in the following ways
\begin{itemize}
\item  $n_+(\J(s))$ can change by one when a real eigenvalue passes through the origin.
\item  $n_+(\J(s))$ can change by two when a complex conjugate pair of eigenvalues crosses the imaginary axis.
\end{itemize}
The first of these possibilities is easy to detect, as it is signalled by the
vanishing of $P_{\J}(s)$. The second is not so easy to detect, thus motivating us to count modulo two.

\begin{thm}
Assume $D$ is non-singular and let $n_+(D)$ be the number of positive
eigenvalues of the diagonal
matrix $D$ where the diagonal entries are of the form $D_{ii}=-\sum_{j}\cos{(\theta_j-\theta_i-\alpha)}$. Let $n_R$ be the number
of roots of the quadratic $P_{\J}(s)$, equation \eqref{quad_eta_poly}, in the open interval $(0,1)$.
Let $n_+(D+A)$ be the number of
eigenvalues of the linearized operator in the open right half-plane ${\rm Re}(\lambda)>0$. Finally assume that $s=1$ is a simple root of  $\det(D+s A)$,
and that $\lambda_0(s)$ is the eigenvalue branch with $\lambda_0(1)=0$. Then
we have the equality
\[
(-1)^{n_+(D+A)-n_+(D)-n_R} = \sign\left(\l.\frac{d\lambda_0}{ds}\r|_{s=1}\right)
\]
In particular we have the following sufficient conditions for instability
\begin{enumerate}[(i)]
    \item $n_R=0$, $n_+(D)$ even and $\frac{d\lambda_0}{ds}<0$;
    \item $n_R=0$, $n_+(D)$ odd and $\frac{d\lambda_0}{ds}>0$;
    \item $n_R=1$, $n_+(D)$ even and $\frac{d\lambda_0}{ds}>0$;
    \item $n_R=1$, $n_+(D)$ odd and $\frac{d\lambda_0}{ds}<0$.
\end{enumerate}
\end{thm}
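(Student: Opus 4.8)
The plan is to track the integer $n_+(\J(s))$ as $s$ increases from $0$ to $1$ and to localize every change of this integer modulo $2$. Since $\J(s)=\D+s\A$ is a real matrix, its non-real eigenvalues occur in complex-conjugate pairs, so $n_+(\J(s))$ is locally constant and can only jump when an eigenvalue meets the imaginary axis: a jump by $\pm 1$ when a real eigenvalue passes through the origin, and a jump by $\pm 2$ when a conjugate pair crosses $\{\operatorname{Re}\lambda=0\}$ away from the origin. Hence, modulo $2$, only origin-crossings matter, and an origin-crossing at $s_0$ happens exactly when $P_{\J}(s_0)=\det\J(s_0)=0$. By Lemma~\ref{lem:det of J(s)}, $P_{\J}$ is quadratic with both roots real, one of them $s=1$; write $s^\ast$ for the other. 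The hypothesis that $s=1$ is a simple root of $\det(\D+s\A)$ forces $s^\ast\neq 1$, and nonsingularity of $\D$ gives $P_{\J}(0)=\det\D\neq 0$, so $s^\ast\neq 0$; thus $s^\ast\in(0,1)$ exactly when $n_R=1$, and both roots are simple.

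First I would check that at $s=s^\ast$ (when it lies in $(0,1)$) the index $n_+$ changes by exactly $1$. By part (3) of Lemma~\ref{lem:det of J(s)}, at the simple root $s^\ast$ the eigenvalue $0$ has algebraic multiplicity one, so it perturbs as a single analytic branch $\lambda(s)$, which is real for real $s$ near $s^\ast$ because $0$ is a simple real eigenvalue of a real matrix. Writing $\det\J(s)=\lambda(s)\,g(s)$ with $g$ the product of the remaining eigenvalues (continuous and nonzero near $s^\ast$), the order of vanishing of $\lambda$ at $s^\ast$ equals that of the polynomial $P_{\J}$ there, namely $1$; so $\lambda'(s^\ast)\neq 0$ and the branch crosses the origin transversally. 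All other jumps of $n_+(\J(s))$ on $(0,1)$ are conjugate-pair crossings away from the origin, contributing $0\bmod 2$. Therefore, for small $\e>0$,
\[
n_+(\J(1-\e))\;\equiv\;n_+(\J(0))+n_R\;\equiv\;n_+(\D)+n_R \pmod 2 .
\]

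Next I would resolve the ambiguity at the endpoint $s=1$, where the branch $\lambda_0$ sits exactly at the origin. Since $s=1$ is a simple root of $\det(\D+s\A)$, $\lambda_0$ is analytic near $s=1$ with $\lambda_0(1)=0$, so $\lambda_0(1-\e)=-\e\,\lambda_0'(1)+O(\e^2)$, and this branch contributes to $n_+(\J(1-\e))$ precisely when $\lambda_0'(1)<0$; all other eigenvalues sit off the imaginary axis at $s=1$ and contribute equally to $n_+(\J(1-\e))$ and $n_+(\J(1))$. Hence $n_+(\J(1-\e))=n_+(\D+\A)+\tfrac12\bigl(1-\sign(\lambda_0'(1))\bigr)$. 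Combining with the previous display gives $n_+(\D+\A)-n_+(\D)-n_R\equiv\tfrac12\bigl(\sign(\lambda_0'(1))-1\bigr)\pmod 2$, which is exactly $(-1)^{\,n_+(\D+\A)-n_+(\D)-n_R}=\sign(d\lambda_0/ds|_{s=1})$. The four instability criteria then follow by rewriting this as $(-1)^{\,n_+(\D+\A)}=(-1)^{\,n_+(\D)+n_R}\,\sign(\lambda_0'(1))$: in each of cases (i)--(iv) the right-hand side equals $-1$, so $n_+(\D+\A)$ is odd, hence at least $1$, so $\J$ has an eigenvalue with positive real part transverse to the soft mode $\1$, giving linear instability of the fixed point.

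\emph{Main obstacle.} The delicate point is the accounting at $s=1$: an eigenvalue lies exactly on the imaginary axis there, so ``how many eigenvalues have entered the right half-plane by $s=1$'' is genuinely ambiguous, and the whole content of the theorem is that this ambiguity is pinned down by whether $\lambda_0$ is entering or leaving the right half-plane as $s\to1^-$, i.e.\ by $\sign(\lambda_0'(1))$. Making this airtight is precisely what the simple-root hypotheses buy us — they guarantee that $\lambda_0$ and the branch through $s^\ast$ are honest analytic curves crossing the origin transversally, rather than tangencies or higher-multiplicity coalescences — together with the fact from Lemma~\ref{lem:det of J(s)} that $P_{\J}$ has no roots in $(0,1)$ beyond a possible simple $s^\ast$, so that no uncounted origin-crossings are hiding inside the interval. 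The conjugate-pair crossings, which one cannot locate explicitly, are harmless because they are invisible modulo $2$.
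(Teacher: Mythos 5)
Your proposal is correct and follows essentially the same route as the paper: a homotopy in $s$ from $\D$ to $\D+\A$, counting origin-crossings (detected by the real roots of the quadratic $P_{\J}(s)$ from Lemma~\ref{lem:det of J(s)}) modulo two, with conjugate-pair crossings invisible mod $2$ and the sign of $\lambda_0'(1)$ resolving the endpoint at $s=1$. In fact your write-up makes explicit the transversality at $s^\ast$ and the bookkeeping at $s=1$ that the paper's proof leaves as brief remarks.
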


\begin{remark}
 The mod two nature of the count arises from the fact that we can have
 complex conjugate pairs of eigenvalues crossing from the left
 half-plane to the right half-plane.  In the classical Kuramoto case we always have that $\frac{d\lambda_0}{ds}>0$ and the
count modulo two becomes an actual count
\[
n_+(D+A)=n_R+ n_+(D).
\]
In particular the stability region for classical Kuramoto is defined by the curve where
$n_+(D)=0$ and $n_R$ transitions from $0$ to $1$ -- essentially the
boundary of the set defined by condition $(iii)$ above. We will see
later in the numerics section that it appears numerically that
boundary of the stable
region is always defined by transition to one of the conditions listed above.
\end{remark}

\begin{proof}
The proof here is simply a collection of prior results and comments.  We return to $\mathbf{J}(s)$, equation~\eqref{J of s operator}, the continuation in $s$ from the diagonal
 matrix $\mathbf{D}$ to the true case of interest, $\mathbf{D+A}$.  The basic observation is that the number of eigenvalues in the left half-plane changes by one when a real eigenvalue passes through the origin and changes by two when a complex conjugate pair of eigenvalues crosses through the axis. Since we can detect real crossings we can easily get a count modulo two.

In Lemma~\ref{lem:det of J(s)}, we derived equation~\eqref{quad_eta_poly} which showed that $P_{\J}(s)=\det(D + s A)$ is a real quadratic function of $s$
and that both roots of $\det(D +  A)=0$ are real. We have then that at a root $s_0$ of  $\det(D + s A)$ the null-space is simple and the crossing transverse,
$\frac{d\lambda_0}{ds}\neq 0$, unless $s=1$ is a double root.  Thus, whenever
\[
(-1)^{n_+(D+A)} = (-1)^{-n_+(D)+n_R}\sign\left(\l.\frac{d\lambda_0}{ds}\r|_{s=1}\right)<0
\]
the system must be unstable.
\end{proof}

\section{Numerical Results} \label{sec:numerics}

\subsection{Visualization of the small $N$ oscillators model}

We begin with a visualization of the three oscillator model, $N=3$.  For any $\alpha$, the map $\mathbf{f}$ is rotationally invariant in the configuration space $\boldsymbol{\theta}$.  Thus the image of the reduced configuration space $\mathcal{T}:=\T^3\bigcap\mathbf{1}^{\perp}$ under the map $\mathbf{f}$ will be a 2-dimensional surface in the 3-dimensional $\boldsymbol{\omega}$-space.  Figure~\ref{fig:N3_ConfigurationSpace} is the surface $\mathbf{f}(\mathcal{T})$ associated with the fixed detuning parameter $\alpha=\pi/6$.
\begin{figure}[th]
\begin{center}
\includegraphics[width=0.4\textwidth]{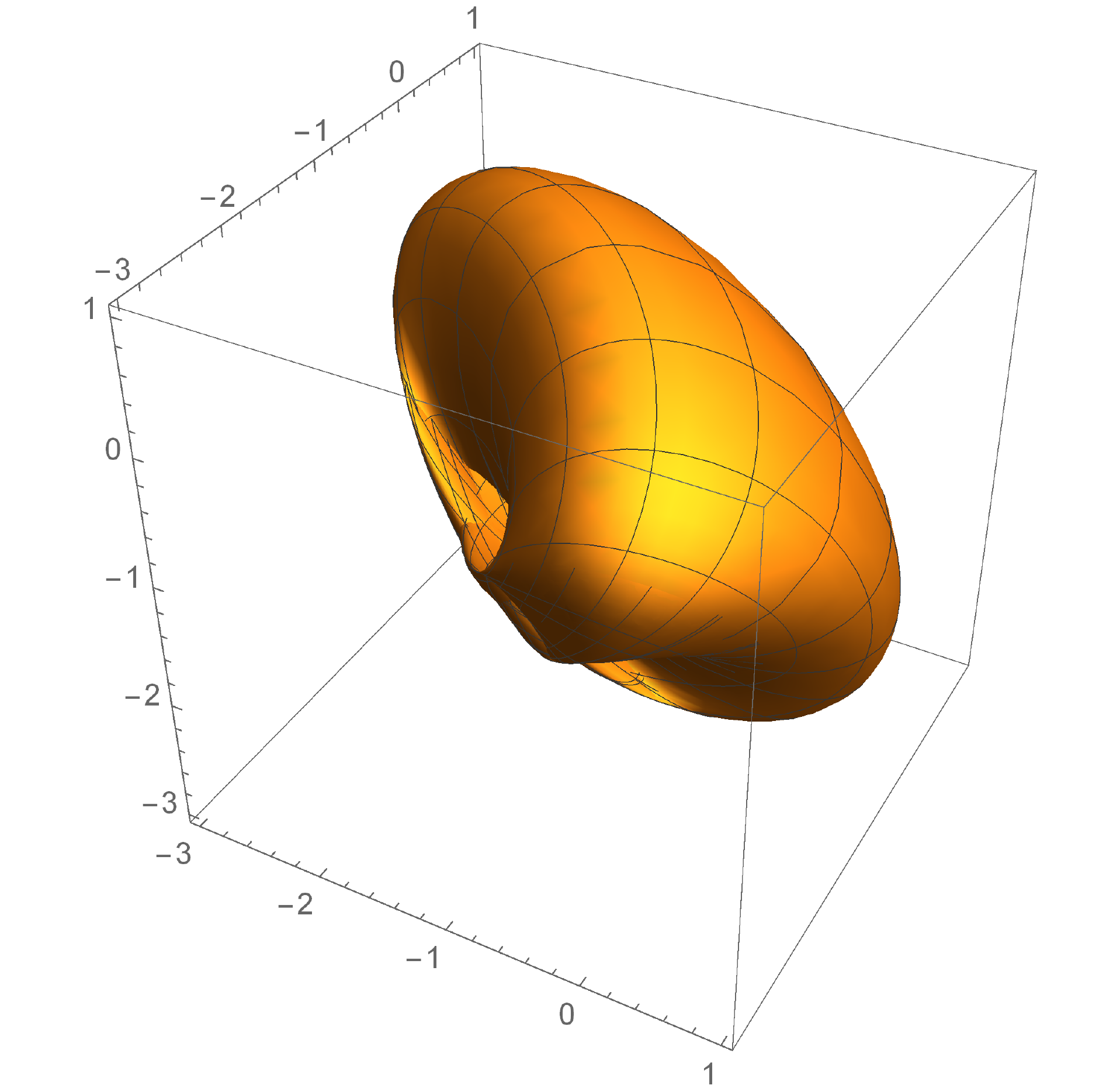}\includegraphics[width=0.4\textwidth]{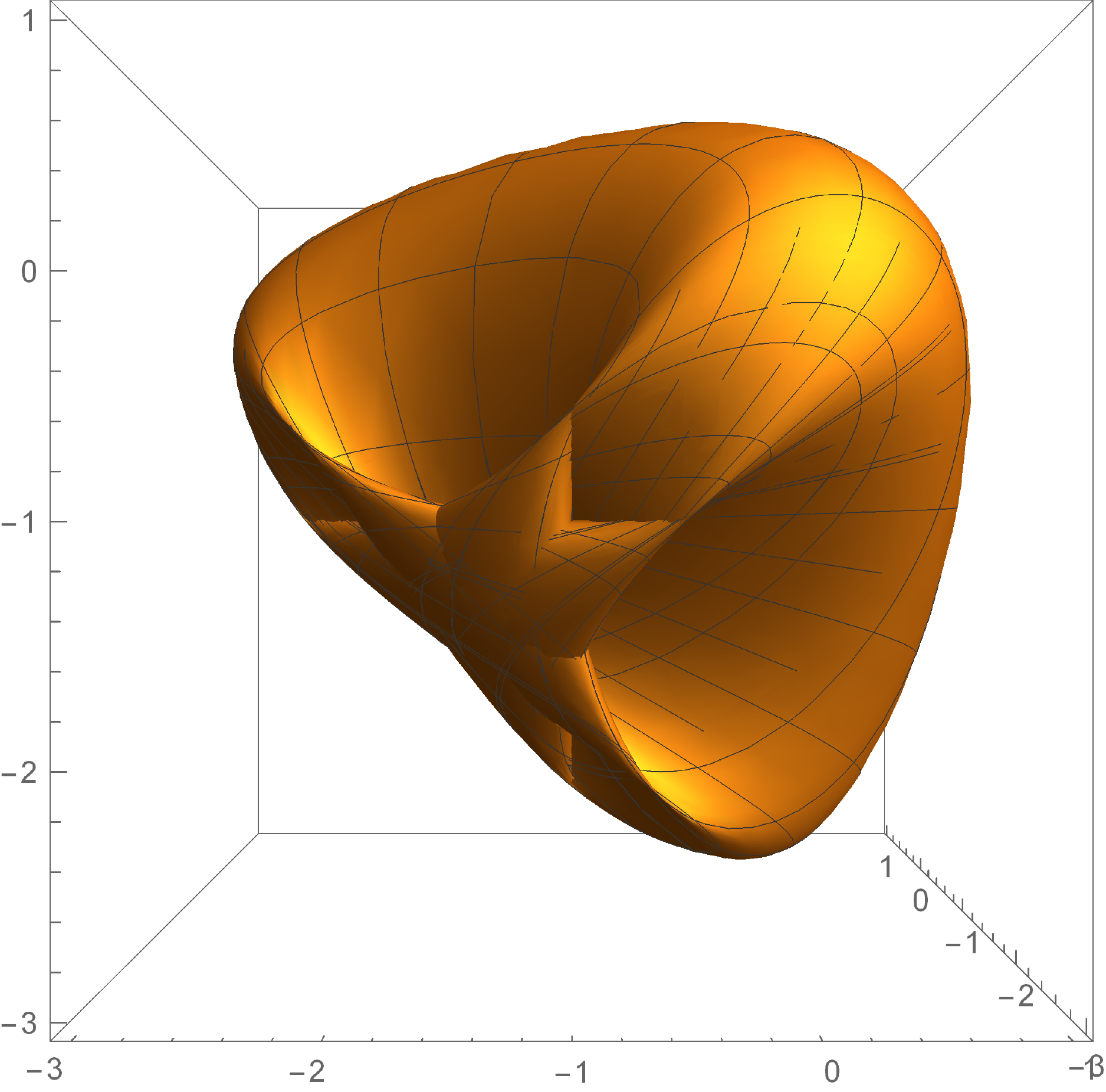}
\end{center}
\caption{$N=3$ with $\alpha=\pi/6$, two views of the image of the $\mathcal{T}$ under $\mathbf{f}$}
\label{fig:N3_ConfigurationSpace}
\end{figure}
We are interested in the portion of this surface that corresponds to $\mathcal{S}_{\omega}:=\mathbf{f}(\mathcal{S}_{\theta})$, the $\bomega$ that give rise to a phase-locked solutions.  To see this, we return to the pre-image $\mathcal{T}$.  Again the $\mathbf{1}^{\perp}$ plane is spanned by $\mathbf{e}_1=(1,-1,0)/\sqrt{2}$ and $\mathbf{e}_2=(1,1,-2)/\sqrt{6}$.  In the local coordinates defined by $\mathbf{e}_1$ and $\mathbf{e}_2$, the phase diagram for three oscillators (where $\alpha=\pi/6$) is summarized in Figure~\ref{fig:N3_ConfSpace_StabilityColoring}.
\begin{figure}[th]
\begin{center}
\includegraphics[width=0.4\textwidth]{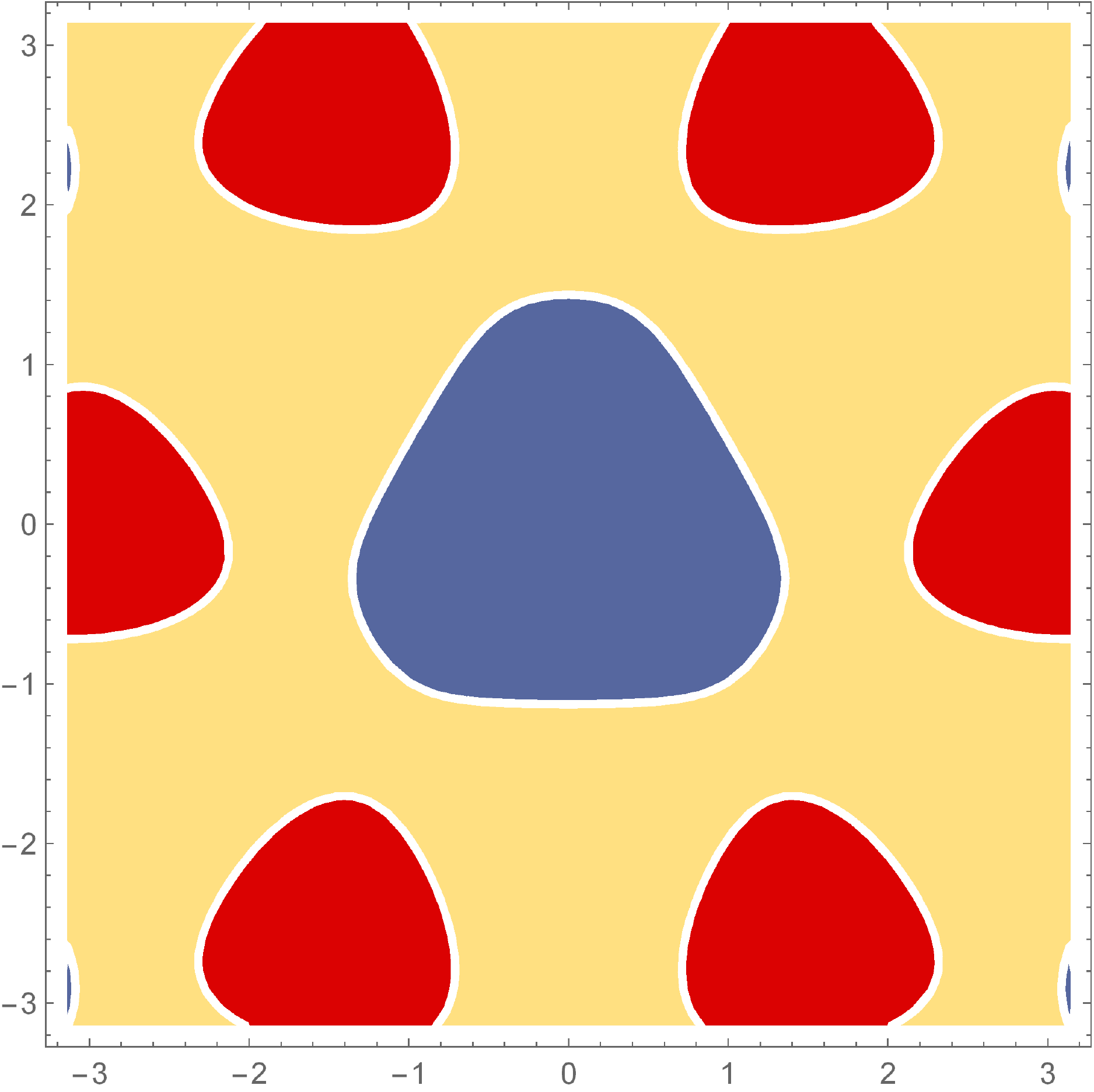}\hspace{0.025\textwidth}\includegraphics[width=0.4\textwidth]{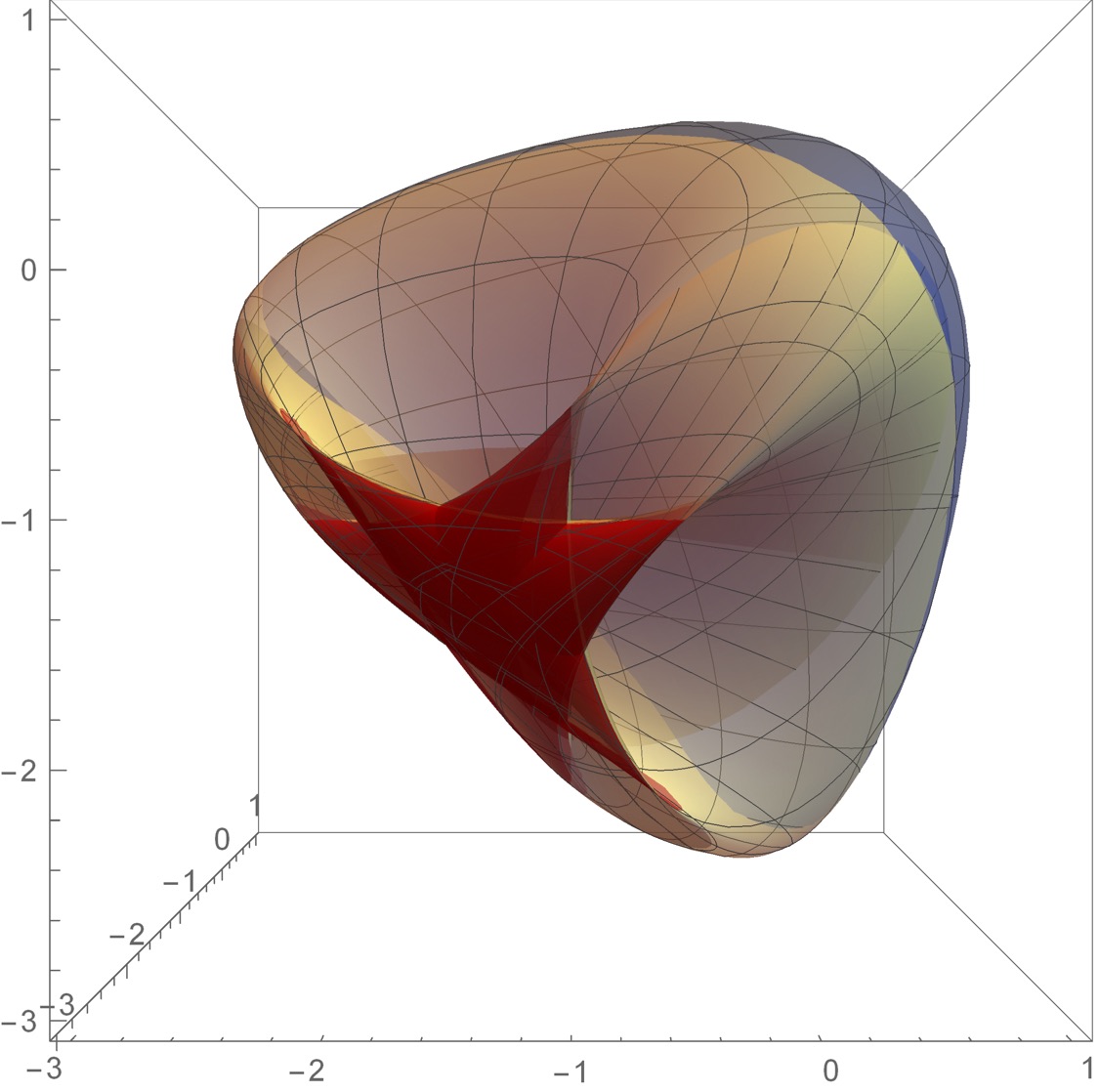}
\end{center}
\caption{$N=3$ with $\alpha=\pi/6$, $\mathbf{1}^{\perp}$ and $\mathbf{f}(\mathbf{1}^{\perp})$ colored by stability (blue is stable)}
\label{fig:N3_ConfSpace_StabilityColoring}
\end{figure}
In this image, $\mathcal{S}_{\theta}$ and $\mathcal{S}_{\omega}$ are paired side-by-side.  The blue regions corresponds to the stable region in local coordinates, where the Jacobian of $\mathbf{f}$ is negative semi-definite with a one dimensional kernel.  The gold and red regions correspond to when the Jacobian of $\mathbf{f}$ has one or two unstable eigen-direction, respectively.  For added clarity, we have the frequency space de-constructed by index in Figure~\ref{fig:N3_ConfSpace_ColorDeconstruction}.  We note that stable set here corresponds exactly to the bottom of the $\alpha=\pi/6$ surface in the orientation of Figure~\ref{fig:ShadowGraphs} (when we discussed the frequency region $\roa$ in Section~\ref{subsec:Stabilty is different in KS}).
\begin{figure}[th]
\begin{center}
\includegraphics[width=0.3\textwidth]{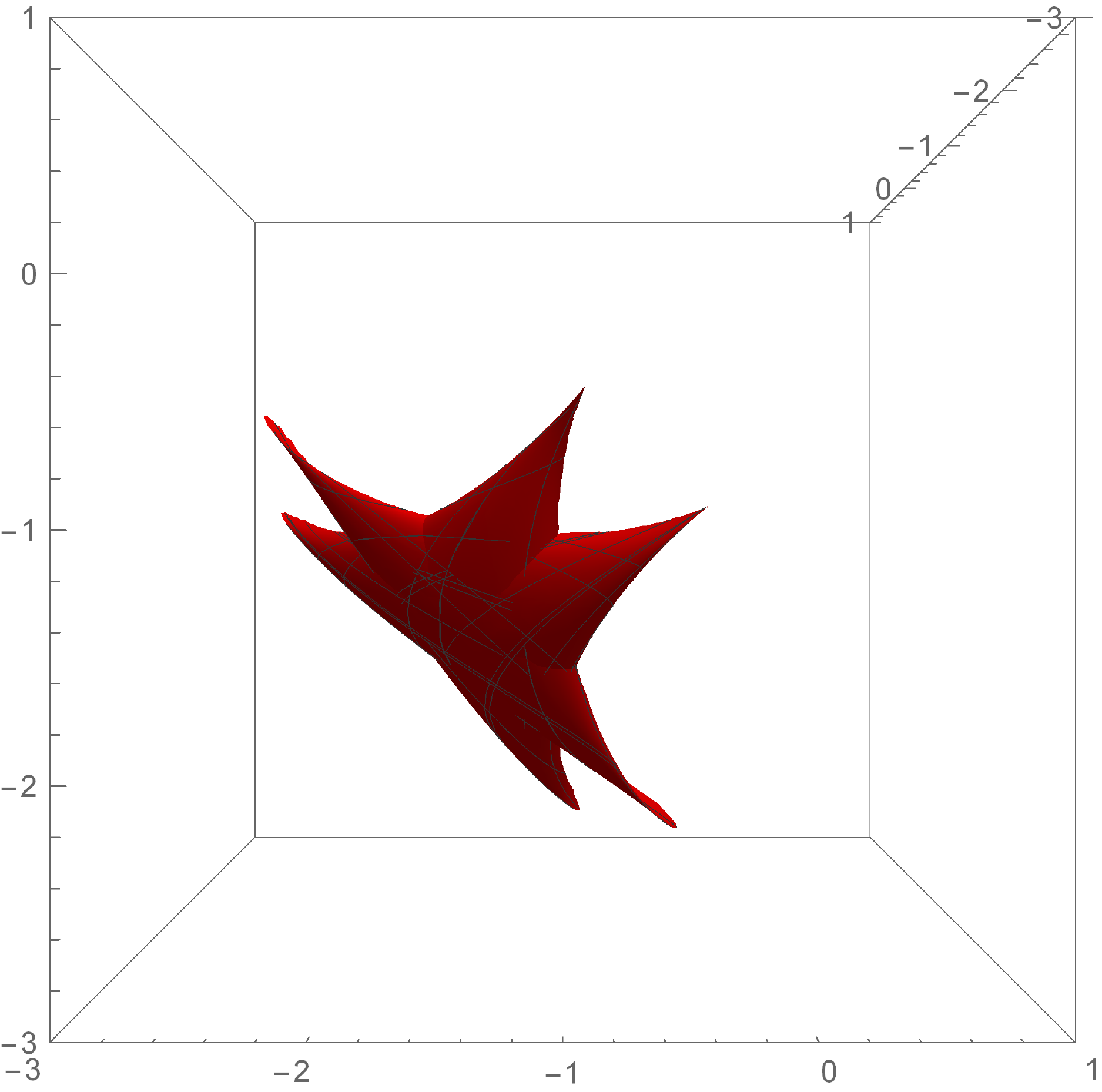}\includegraphics[width=0.3\textwidth]{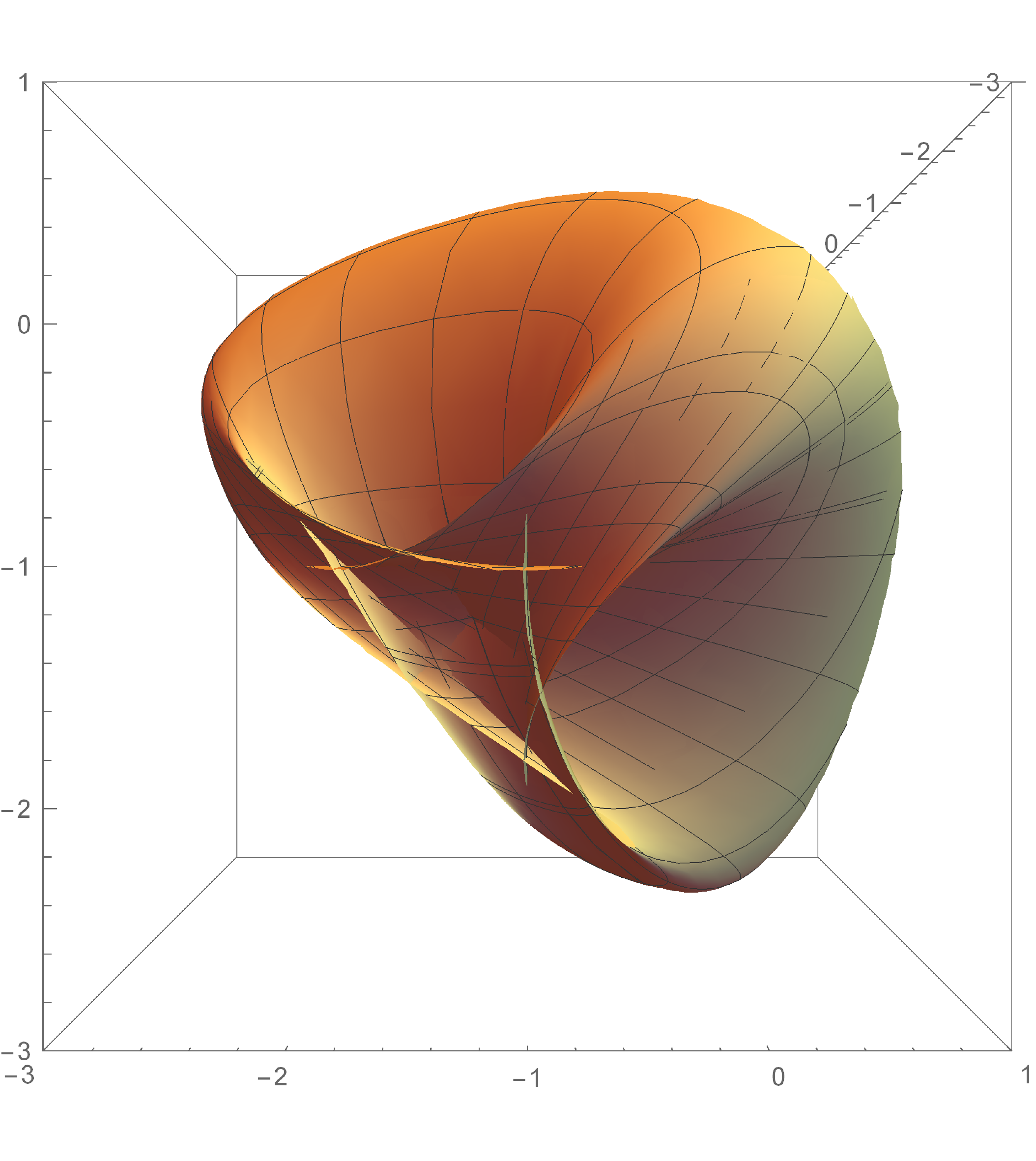}\includegraphics[width=0.3\textwidth]{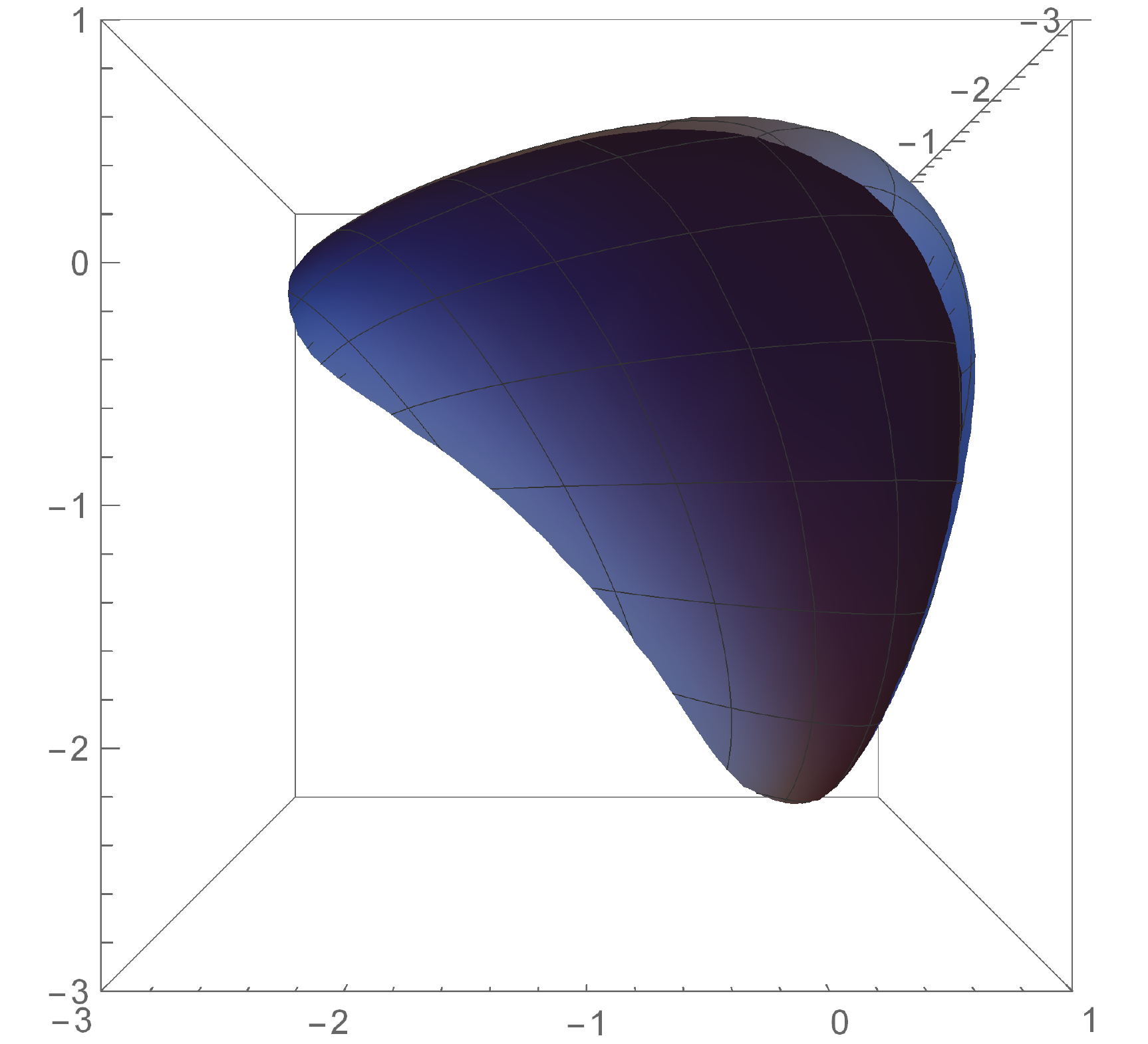}
\end{center}
\caption{$N=3$ with $\alpha=\pi/6$, $\mathbf{f}(\mathbf{1}^{\perp})$ deconstructed by index}
\label{fig:N3_ConfSpace_ColorDeconstruction}
\end{figure}

 Determining the probability for a Kuramoto system to admit a phased locked solution when $\bomega$ is randomly assigned is directly related to understanding the size of the frequency space $\mathcal{S}_{\omega}$.  For standard Kuramoto, $\mathcal{S}_{\theta}$ is a convex set that is invariant under the actions of the dihedral group of order $N$ (see \cite{Bro.DeV.Park.2012}).  Moreover, these geometric properties are preserved under the map $\mathbf{f}$ when $\alpha$ is zero.  This is due to the fact that when $\alpha=0$, $\mathbf{f}$ is an odd function and the reflections $\bomega\rightarrow -\bomega$ and $\btheta\rightarrow-\btheta$ admit another stable solution.  In the end, excellent estimates exist for the size of $\mathcal{S}_{\omega}$ for the standard model. This geometry does not hold for the Kuramoto-Sakaguchi model.  In Figure~\ref{fig:N3_Boundary of Stable Region}, we have the boundary of $\mathcal{S}_{\theta}$ for values of $\alpha$ ranging from zero to $5\pi/12$.  The ``hexagonal" red curve corresponds to standard Kuramoto.  All other boundary curves loose the extra geometric structure and possess only triangular symmetry.  In fact, it is easy to see the same behavior in the $N=4$ model.  In Figure~\ref{fig:N4_FreqSpaces}, the corresponding pictures are all in local coordinates as we are unable to embed $\mathcal{S}_{\omega}$ in its natural space.  None the less, the symmetry and reduction of dihedral order is easily prevalent.  In keeping with earlier conventions, the blue regions in Figure~\ref{fig:N4_ConfigSpaces} correspond to the stable region $\mathcal{S}_{\theta}$ in local coordinates for varying $\alpha$.   We know that standard Kuramoto possesses octahedral symmetry, while we can see that all other configuration spaces correspond to tetrahedral symmetry.  Moreover, in Figure~\ref{fig:N4_FreqSpaces} this symmetry is preserved (in local coordinates) under the mapping $\mathbf{f}$.

As $\mathcal{S}_{\omega}$ no longer resides in the $\mathbf{1}^{\perp}$ plane, we can't use convexity to estimate its size.  But at the outset of our study, there was no reason to believe $\mathcal{S}_{\theta}$ would not be so and we hoped to use the size of $\mathcal{S}_{\theta}$ to estimate the size of the stable region.  For large $\alpha$, the convexity of $\mathcal{S}_{\theta}$ is lost.  In Figure~\ref{fig:N3_Boundary of Stable Region}, the fuchsia colored boundary curve corresponds to $\alpha= 7\pi/24$ and the associated $\mathcal{S}_{\omega}$ is clearly no longer convex.  The same loss of convexity can be seen in the four oscillator model.  In Figure~\ref{fig:N4_ConfigSpaces},the configuration space corresponding to $\alpha=2\pi/7$ is not convex.  In the end we were unable to develop a geometric approach to estimating the size of the stable region.  In Section~\ref{subsec:highernumerics}, we appeal to a purely numeric approach to estimate the size.

\begin{figure}[th]
\begin{center}
\includegraphics[width=0.3\textwidth]{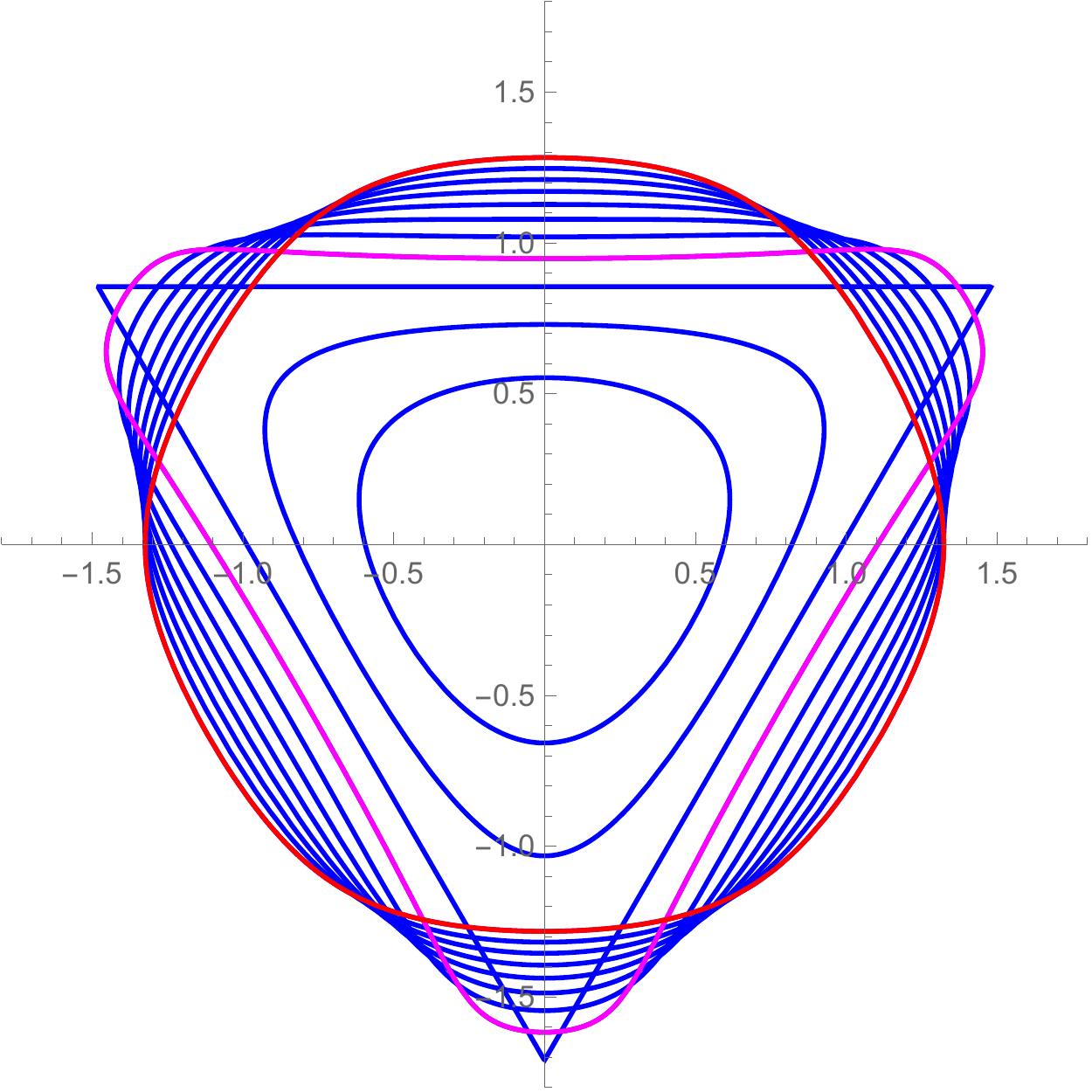}\includegraphics[width=0.3\textwidth]{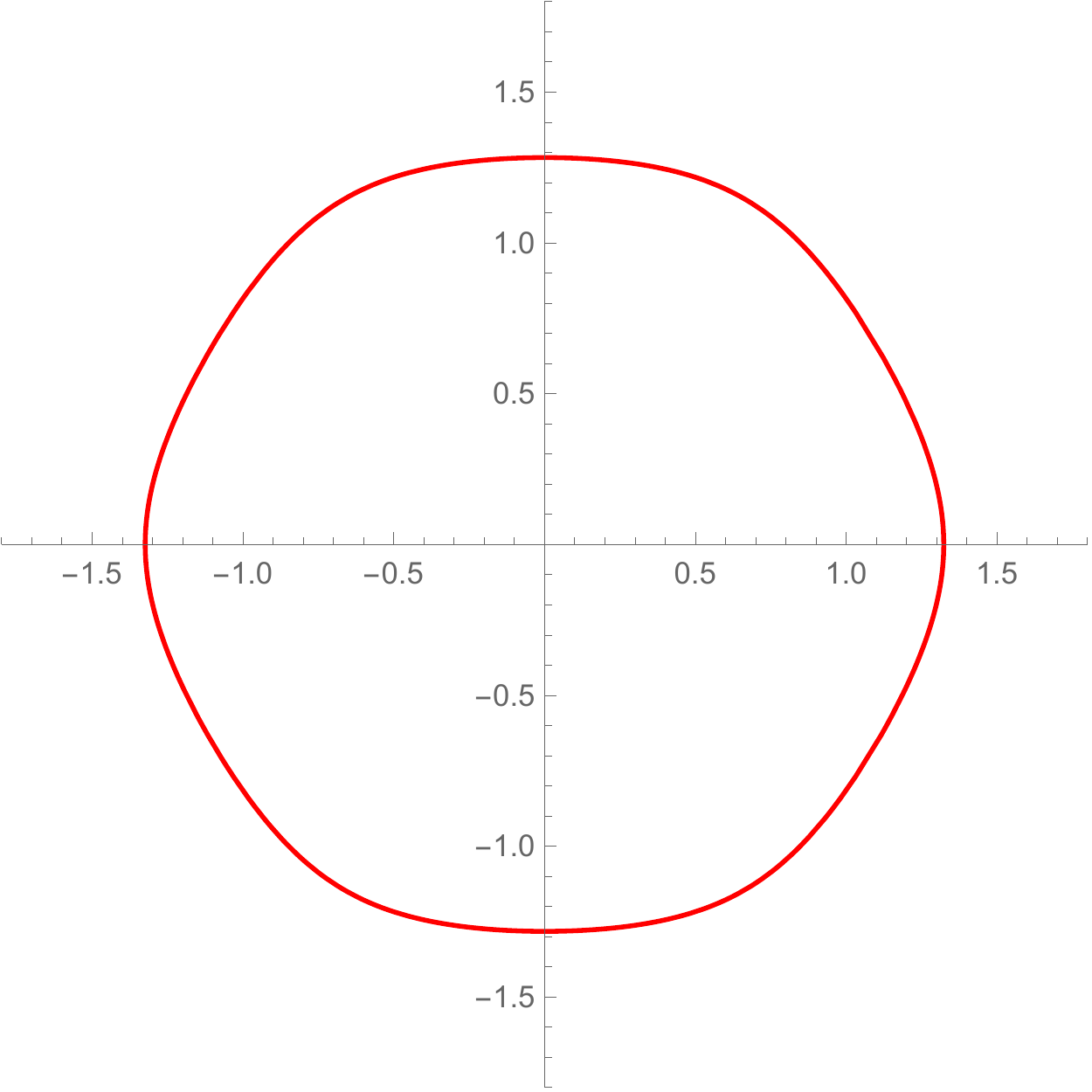}\includegraphics[width=0.3\textwidth]{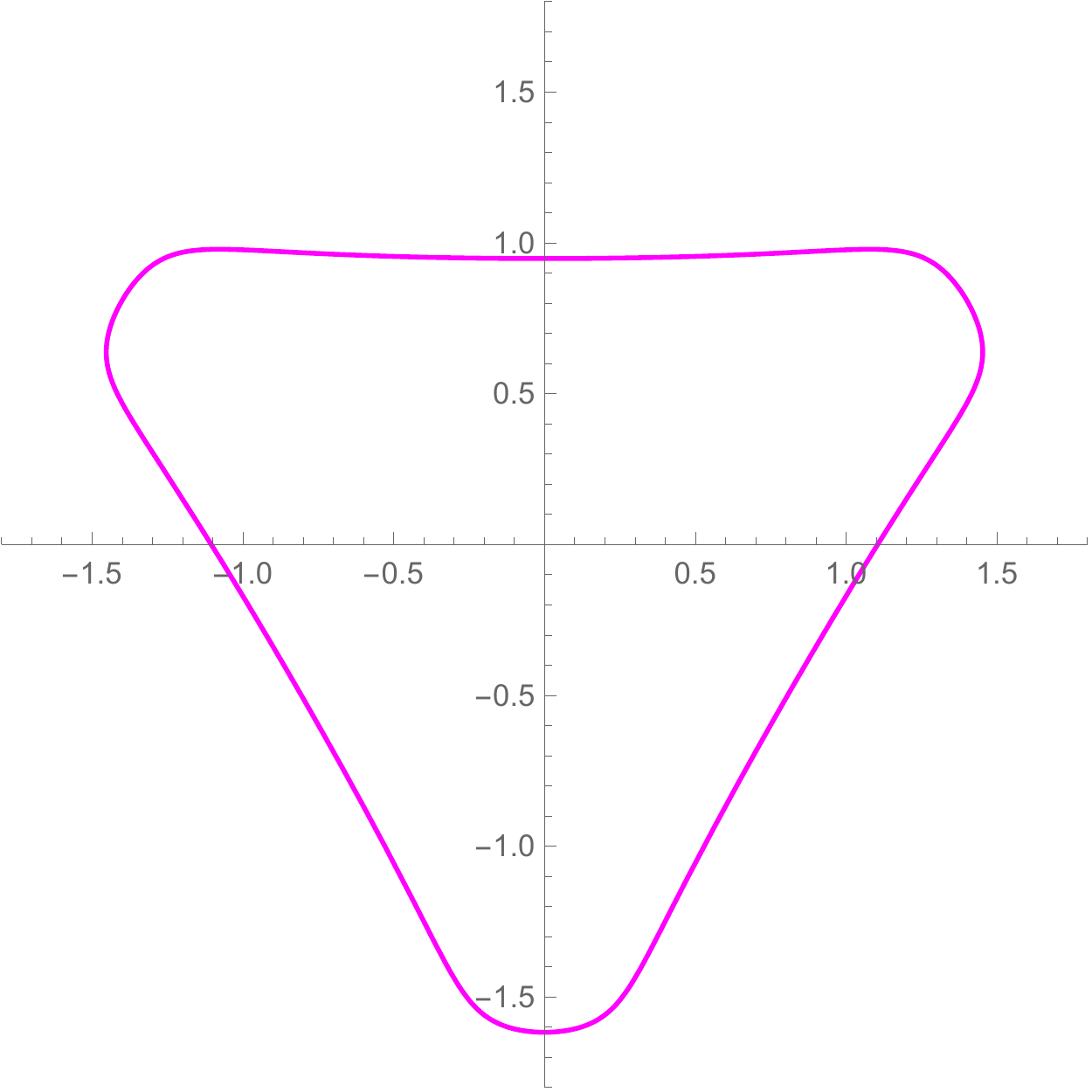}
\end{center}
\caption{$N=3$, the boundary of the stable region in $\mathbf{1}^{\perp}$ for varying values of $\alpha$ }
\label{fig:N3_Boundary of Stable Region}
\end{figure}

\begin{figure}[th]
\begin{center}
\includegraphics[width=0.95\textwidth]{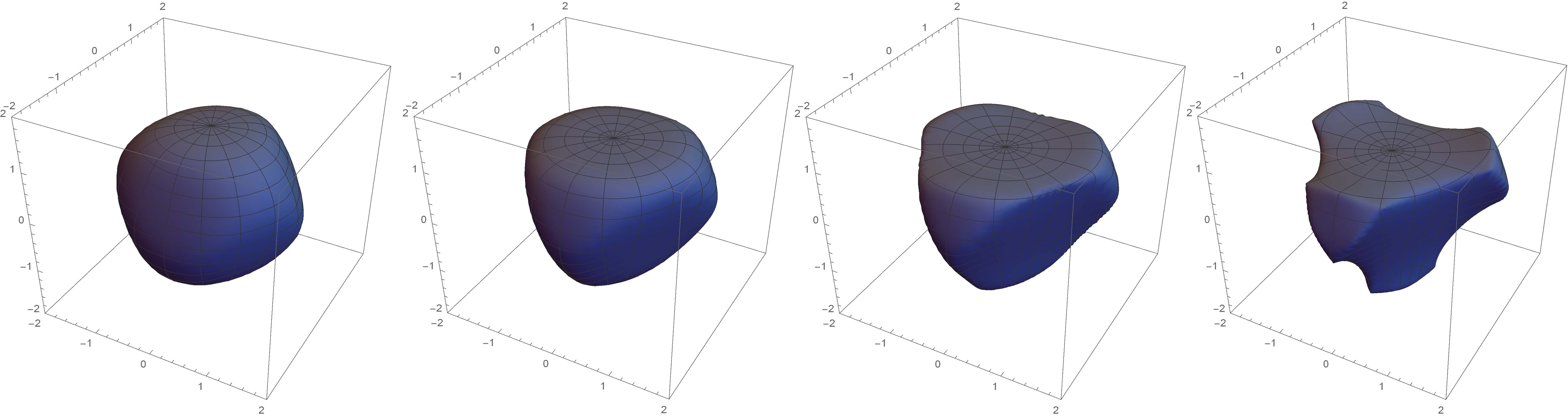}

\end{center}
\caption{$N=4$, The region of stability configurations $\mathcal{S}_{\theta}$ for $\alpha\in\{0, \pi/6, \pi/4, 2\pi/7\}$}
\label{fig:N4_ConfigSpaces}
\end{figure}

\begin{figure}[th]
\begin{center}
\includegraphics[width=0.95\textwidth]{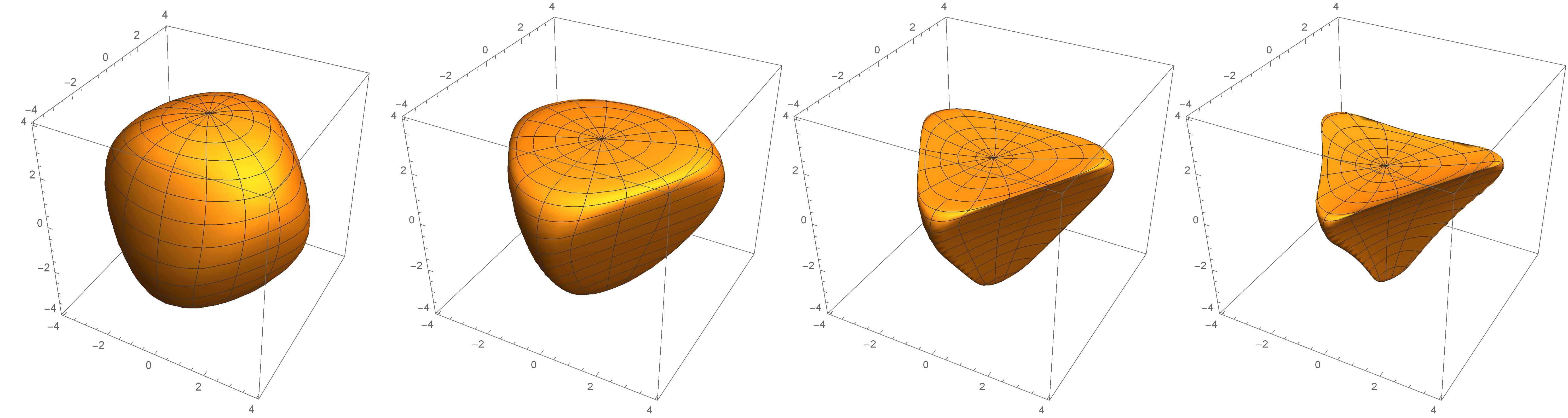}
\end{center}
\caption{$N=4$, The region of stability frequencies $\mathcal{S}_{\omega}=\mathbf{f}(\mathcal{S}_{\theta})$ for $\alpha\in\{0, \pi/6, \pi/4, 2\pi/7\}$}
\label{fig:N4_FreqSpaces}
\end{figure}
stuff

\subsection{Visualization of the Instability Index for Three Oscillators}

For the three oscillator model, we give a sequence of numerical plots
depicting the count, modulo two, of the dimension of the unstable
manifold. This is depicted in Figure (\ref{fig:mod2stuffs}).
The graphs are given in (mean zero) configuration space: $\theta =
x\left(\frac1{\sqrt{2}},-\frac{1}{\sqrt{2}},0\right) + y
\left(\frac{1}{\sqrt{6}},\frac{1}{\sqrt{6}},-\frac{2}{\sqrt{6}}\right)$. The
configuration plane is colored white if the dimension of the unstable
manifold is even (including zero, the stable case) and is shaded if
the configuration has an odd dimensional unstable manifold (obviously
always unstable). The four subgraphs represent different $\alpha$
values: $\alpha = 0,\frac{\pi}{6},\frac{\pi}{3},\frac{2\pi}{5}$.
What is interesting is that the count modulo two of
the number of unstable eigenvalues appears to always capture the most
important transition, that from stability to instability. In each of
pictures the central white region is stable, and is surrounded by six
regions where there are two eigenvalues of positive real part. For
most values of $\alpha$ these regions do not touch showing that as one
varies the configuration the transition from stability to instability
occurs by a single real eigenvalue crossing from the left to the right
half-lines, a transition that is always detected by our theorem. For a
single value of $\alpha=\frac{\pi}{3}$ the stable region touches the
regions with two unstable eigenvalues on a co-dimension two set
(three isolated points), but the boundary of the stable region is
still defined by the curve representing a single real eigenvalue
crossing. For all other values of $\alpha$ the stable
region appears to be the region containing the origin and bounded by
the curves where the instability index changes from even to odd.
There is no obvious region why a configuration could not go unstable
by having a complex conjugate pair of eigenvalues cross from the left
half-plane to the right, but we have not observed this occurring.

\begin{figure}[th]
\begin{center}
\includegraphics[width=0.9\textwidth]{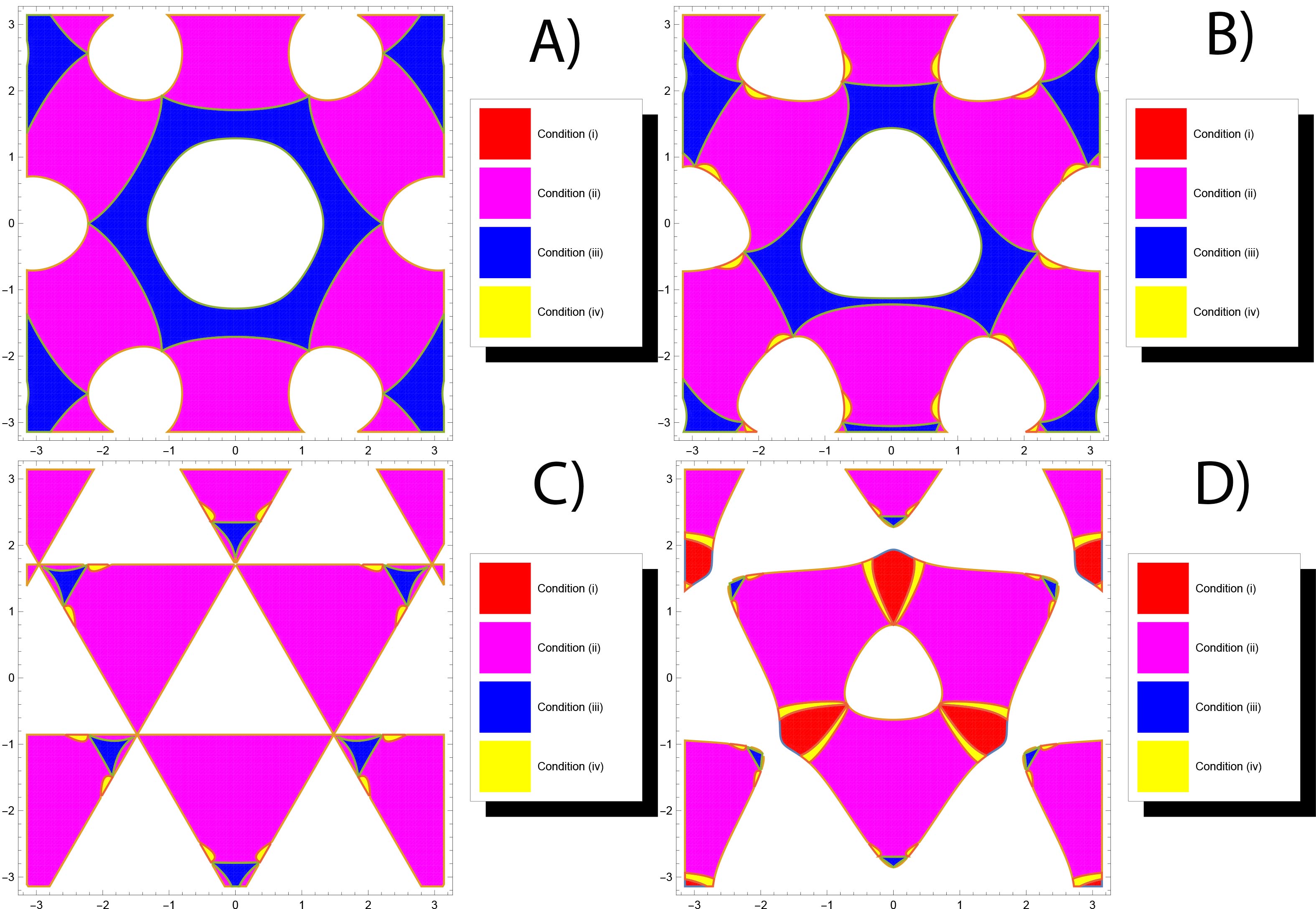}
\end{center}
\caption{This figure (color online) depicts the mod 2 instability
  count in configuration space for various values of $\alpha$ ($\alpha \in
  \{0,\frac{\pi}{6},\frac{\pi}{3},\frac{2\pi}{5}\}$ for subfigures (A)---(D) resp.) The white region
  indicates regions where the corresponding phase-locked solutions
  have an even dimensional unstable manifold, while the shaded regions
indicate an odd dimensional unstable manifold. }

\label{fig:mod2stuffs}
\end{figure}

\subsection{Higher-dimensional numerics}\label{subsec:highernumerics}

It is difficult to visualize the shape of the stable region when $N$ is large, but we can compute its volume.  In this section, we present a few figures showing how the volume of the stable region varies with respect to $N$ and $\alpha$.

The basic method used in this section is of Monte Carlo type, but a direct Monte Carlo simulation will not be useful here.  When $N$ is large, we expect the stable region to scale exponentially with respect to some fixed volume; as an example, imagine that we can bound the stable region in some ball in some $\ell^p$ norm.  Unless the stable region is just lucky enough to fill out most of this ball for large $N$ (and this will only occur if the region is well-represented in the ``corners'' of the ball), then the vast majority of our samples will be outside of the stable region.

To fix this issue, we do a stratified sampling approach.  More specifically, let us say that we're given the parameter $\texttt{numStrata}$ and $\texttt{numSamples}$.  We then define $t_k = (k/$\texttt{numStrata}$)\pi$, and define the region $R_k = [-t_k,t_k]^N$.  We then choose $\texttt{numSamples}/\texttt{numStrata}$ samples uniformly in $R_k\setminus R_{k-1}$, count the number inside the stable region (we can compute $n_+(\J)$ for each sample), and then weight these samples by $\mathrm{vol}(R_k\setminus R_{k-1})$.

\begin{figure}[th]
\begin{center}
\includegraphics[width=0.4\textwidth]{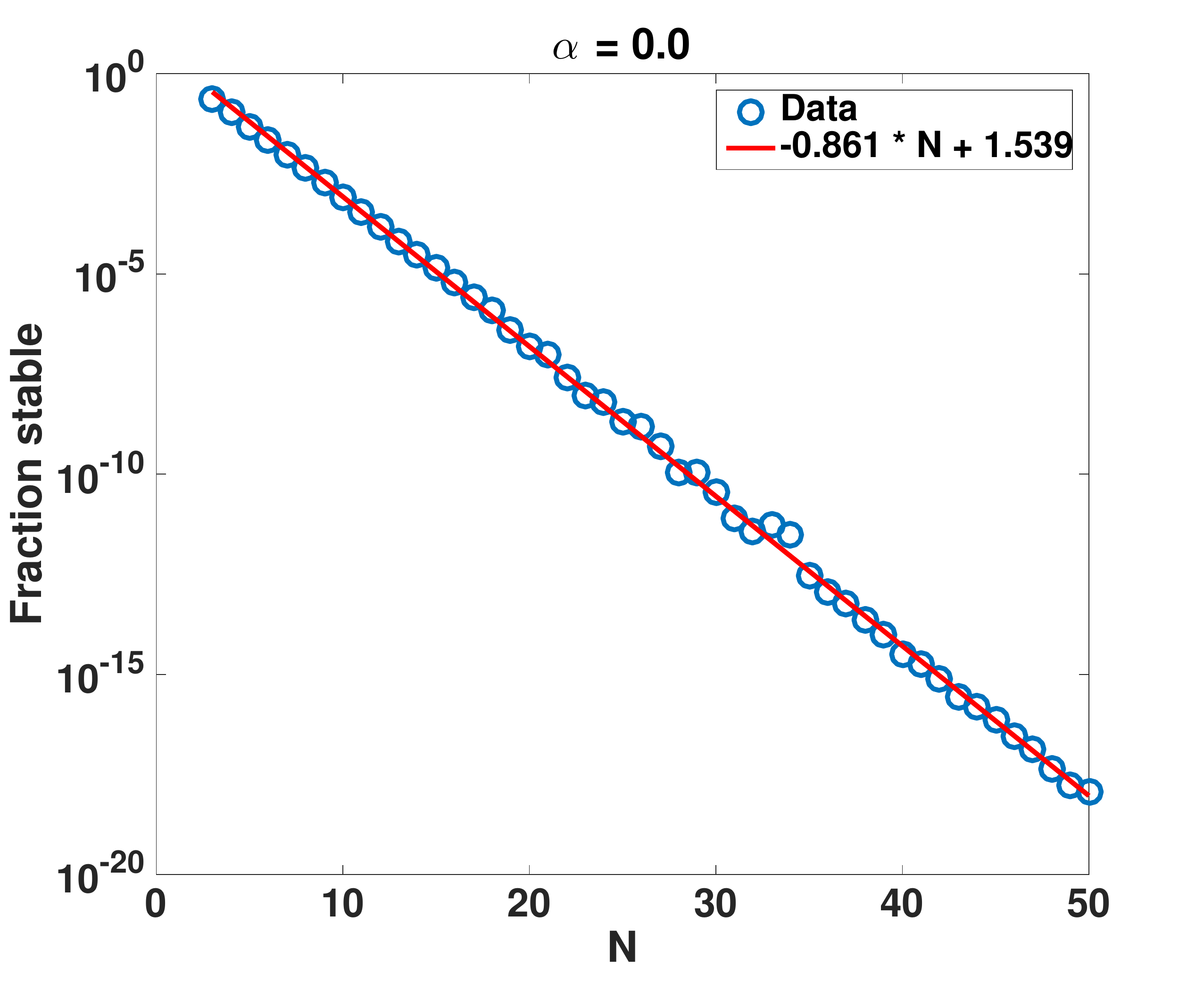}\includegraphics[width=0.4\textwidth]{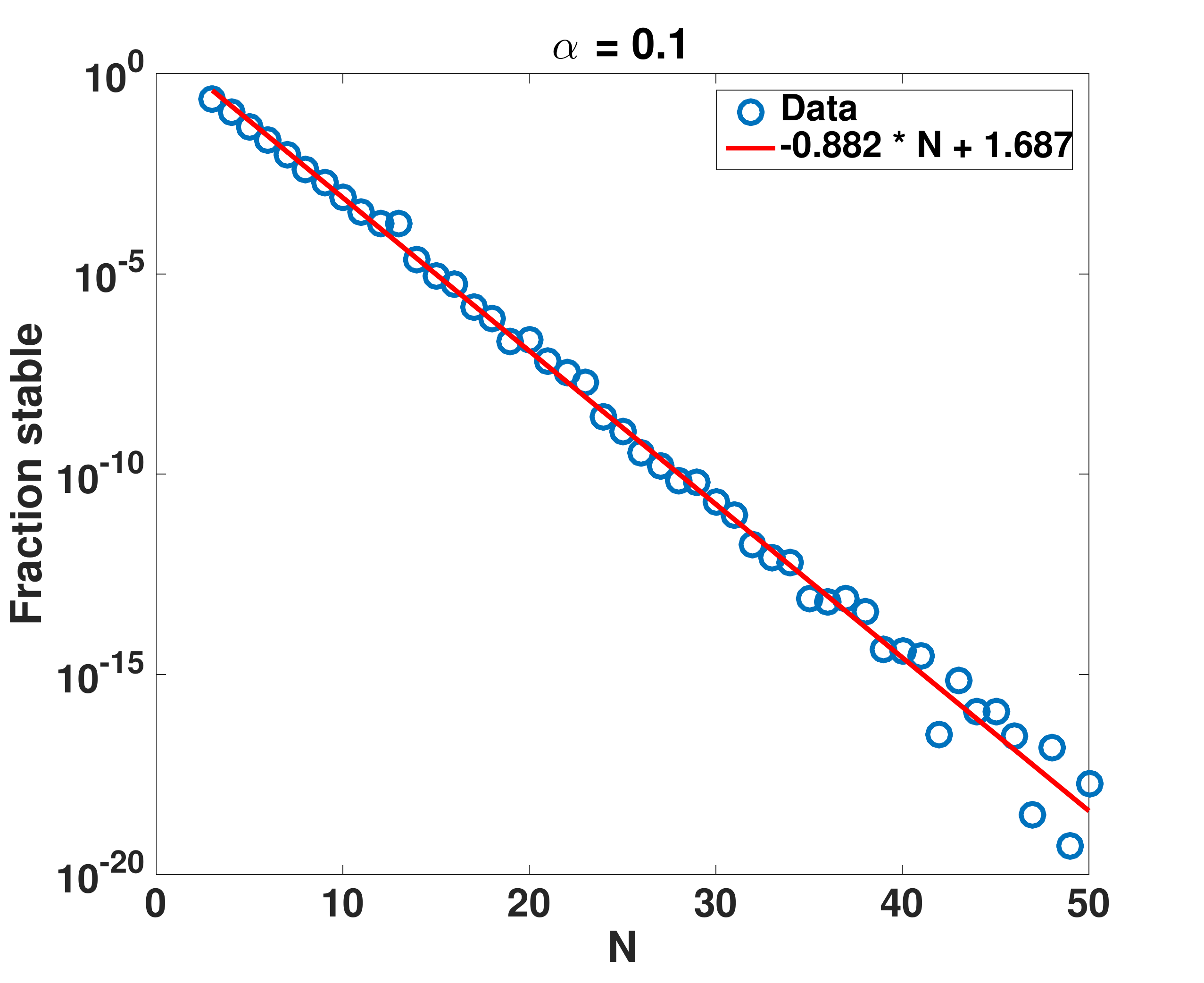}
\end{center}
\caption{Volume of the stable region as a function of $N$ for $\alpha = 0$ and $\alpha = 0.1$.}
\label{fig:Nscale}
\end{figure}

\begin{figure}[th]
\begin{center}
\includegraphics[width=0.8\textwidth]{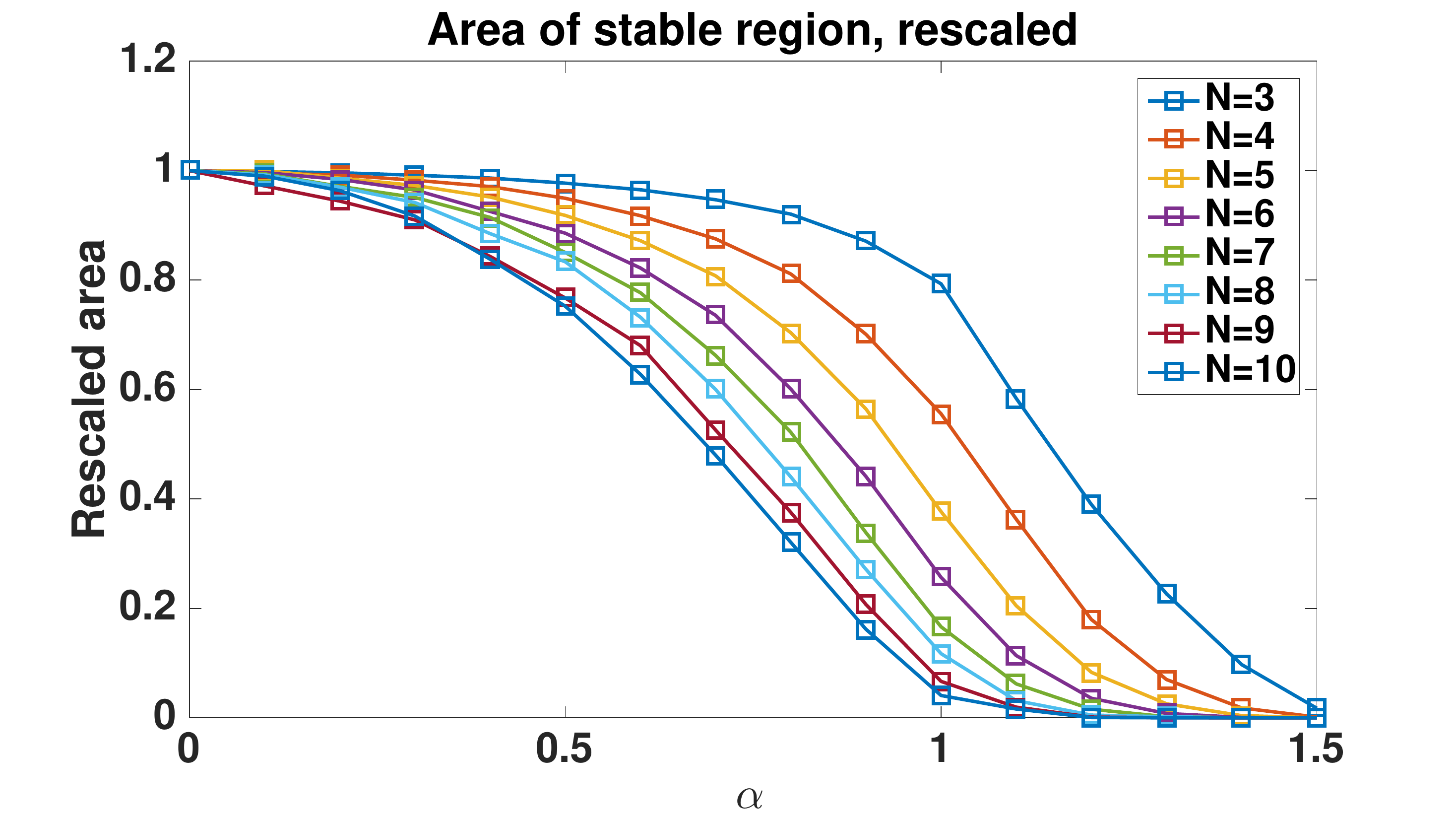}
\end{center}
\caption{(Rescaled) volume of the stable region as a function of $\alpha$ and $N$.  }
\label{fig:AOS}
\end{figure}

In Figure~\ref{fig:Nscale} we plot the volume of the stable set as a function of $N$ for two values of $\alpha$:  $\alpha = 0$ and $\alpha = 0.1$.   As we can see, the volume decays rapidly in $N$, i.e. the volume is $\rho^N$ for some $\rho \in (0,1)$.  The numerics suggest that the decay gives a $\rho$ value somewhere in the $(0.4,0.5)$ range, which we have found by the best least-squares fit.  This justifies the stratification method mentioned above:  for $N=50$ the volume of the stable region is more than fifteen orders of magnitude below unit volume, and to try and capture this volume by direct sampling would be prohibitively expensive.  In all of the numerics done here, we used $100$ strata and sampled each stratum $1000$ times, giving a total of $10^5$ samples for each set of parameters.

In Figure~\ref{fig:AOS} we plot the volume of the stable region as a
function of $\alpha$ and $N$:  each curve represents a fixed value of
$N$, and moving to the right on the curve is an increase in $\alpha$.
Each curve is rescaled so that the standard Kuramoto ($\alpha =0$) for
a given $N$ has unit area, and then we plot the dependence on
$\alpha$.  (Of course, if we did not rescale, then by the results in
Figure~\ref{fig:Nscale}, the curves for large $N$ would be orders of
magnitude smaller and thus not visible on the same plot.  We see that
each of the curves is monotone decreasing  as a function of $\alpha$,
and goes to zero as $\alpha\nearrow\pi/2$.  Note that it falls off
slightly more quickly for larger $N$, but the difference is not that
extreme.

We also note some related ideas appearing in~\cite{Tim}, where the author
has computed bounds on the volume of the stably phase-locked region
for other generalizations of the Kuramoto model --- the model studied there has
similar issues to Kuramoto--Sakaguchi, as the linearization gives a
non-symmetric eigenvalue problem.

\section{Conclusion}\label{sec:outtro}

In this paper we study the stability of phase-locked solutions to the
Kuramoto-Sakaguchi model. We have proved two results, a sufficient
condition for stability and a method for counting the number of
eigenvalues with positive real part modulo two, which gives a
sufficient condition for instability. Numerical evidence in the case
of three oscillators suggests that this count modulo two suffices to
define the asymptotically stable region -- that as the frequency
vector is varied the phase-locked solutions generically transition to
instability via a single real eigenvalue crossing from the left
half-line to the right half-line, and not via a complex conjugate pair
of eigenvalues crossing into the right half-plane. We do not currently
have a proof of this conjecture.

\section{Acknowledgments}

J.C.B. would like to acknowledge support under NSF grant NSF- DMS 1615418.

\noindent T.E.C. would like to acknowledge support from Caterpillar Fellowship Grant at Bradley University.


\end{document}